\documentclass{article}

\usepackage[
  left=1.25in,
  right=1.25in,
  top=1.2in,
  bottom=1.2in
]{geometry}

\usepackage{amsmath, amssymb, amsthm}
\usepackage{mathtools}
\usepackage{hyperref}
\usepackage{dsfont}

\usepackage{pgfplots}
\pgfplotsset{compat=1.18}

\usepackage[dvipsnames]{xcolor}
\usepackage{algorithm}
\usepackage{bbm}
\usepackage{enumitem}
\usepackage{algpseudocode}
\usepackage{bm}
\usepackage[section]{placeins}
\usepackage{tikz}
\usepackage{cancel}
\usepackage[style= numeric-comp,hyperref=true, doi=false,url=false,
            isbn=false,
            firstinits=true, sorting = none,
            block=none, backend=bibtex,maxnames=99]{biblatex}

\bibliography{generator}

\usetikzlibrary{arrows.meta,calc,positioning,fit,decorations.pathreplacing}

\usepackage{CJKutf8}

\newcommand{\cok}{\mathsf{co}^k}

\newcommand{\N}{\mathbb{N}}
\newcommand{\R}{\mathbb{R}}
\newcommand{\C}{\mathbb{C}}

\newcommand{\bfo}{\mathbf{1}}
\newcommand{\tr}{\operatorname{tr}}

\newcommand{\frakg}{\mathfrak{g}}
\newcommand{\frakh}{\mathfrak{h}}

\newtheorem{theorem}{Theorem}

\newtheorem{proposition}[theorem]{Proposition}

\newtheorem{lemma}{Lemma}

\newtheorem{mainthm}{Theorem}

\newtheorem{maincor}{Corollary}

\newtheorem{controlcor}{Corollary}

\newcommand{\rint}{\operatorname{int}}

\newcommand{\xc}[1]{\vspace{.1cm}
\noindent {\em #1} }

\title{Integrable Generators of Algebraic Vector Fields for \\ Real Connected Semi-simple Matrix Groups}

\author{Yiyang Jiang$^*$ \quad and \quad Xudong Chen\footnote{Y. Jiang and X. Chen are with the Department of Electrical \& Systems Engineering, Washington University, St. Louis, MO 63130, USA.  Emails: {\tt\small  \{j.yiyang, cxudong\}@wustl.edu}. Corresponding author: X. Chen.}}

\date{}

\begin{document}

\maketitle

\begin{abstract}
We show that for any real, connected,  semi-simple matrix group, its Lie algebra of algebraic vector fields can be generated by three complete vector fields. 
\end{abstract}

\section{Introduction and main results}\label{sec:introduction}

Let $G$ be a real, connected, semi-simple matrix group, by which we mean a connected, closed subgroup of the general linear group $\operatorname{GL}(\mathbb{V})$, for some real vector space $\mathbb{V}$, whose Lie algebra $\mathfrak{g}$ over~$\R$ is semi-simple 
(see, e.g.,~\cite[Section~VI]{Knapp} for classification of all simple real Lie algebras).  
The class of such groups includes all the identity components of the classical Lie groups given in~\cite[Section~I.17]{Knapp}. 
We also refer the reader to~\cite[Chapter 5]{OnishchikV} for a comprehensive study of these groups.  
We show in this paper that the Lie algebra of algebraic vector fields on any such group~$G$ can be generated by three complete vector fields.  
We formulate the result precisely.

Let $\Phi$ be the space spanned by the matrix coefficients associated with the standard representation of $G$ on~$\mathbb{V}$, and $\mathcal{A}(\Phi)$ be the unital algebra generated by $\Phi$. 
A vector field $V$ on $G$ is viewed as a map $V:G\to\operatorname{End}(\mathbb{V})$. 
We use $\mathfrak{X}(G)$ to denote the space of smooth vector fields on~$G$.  
The vector field $V$ is said to be \emph{algebraic} if, in a basis of $\mathbb{V}$, all matrix entries of $V$ belong to $\mathcal{A}(\Phi)$. Denote by $\mathfrak X_{\mathrm{alg}}(G)$ the space of algebraic vector fields. 
Further, we say that $V$ is {\it $\R$-complete} (or simply {\it complete}) if its flow $e^{tV}: G\to G$ is well defined for all $t\in \R$. 
Given a set $S\subseteq \mathfrak{X}(G)$, let $\operatorname{Lie}(S)$ be the Lie algebra generated by~$S$.

The main result of the paper is then the following:

\begin{mainthm}\label{thm:main}
For any real, connected, semi-simple matrix group $G$,   
there exist three complete algebraic vector fields $W_1$, $W_2$, and $W_3$ such that 
$\mathfrak X_{\mathrm{alg}}(G) = \operatorname{Lie}\{W_1,W_2,W_3\}$. 
\end{mainthm}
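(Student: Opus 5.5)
Since $\mathfrak X_{\mathrm{alg}}(G)$ is a free $\mathcal{A}(\Phi)$-module with basis given by the right-invariant vector fields, I will write every algebraic vector field as $\sum_i f_i X_i$ with $f_i\in\mathcal{A}(\Phi)$, where $X_i(g)=A_i g$ and $\{A_i\}$ is a basis of $\frakg$. The plan is to produce three complete fields whose generated Lie algebra contains every right-invariant field $X_A$, $A\in\frakg$, and also every product $\phi X_A$ with $\phi$ in a generating set of $\mathcal{A}(\Phi)$. The products $fX_A$ for general $f$ then follow from the bracket identity
\[
[\phi X_A,\psi X_B]=\phi\psi[X_A,X_B]+\phi(X_A\psi)X_B-\psi(X_B\phi)X_A .
\]
The key fact behind this identity is that $\Phi$ is invariant under derivation by right-invariant fields: $X_A(g\mapsto \langle u,gv\rangle)=\langle u,Agv\rangle$. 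Hence the $\frakg$-action on $\Phi$, and on $\mathcal{A}(\Phi)$, is through a finite-dimensional representation.

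\textbf{Step 1 (invariant part, two generators).} A real semisimple Lie algebra is generated by two elements (Kuranishi). I will choose a generic pair $A,B\in\frakg$; then $X_A,X_B$ are complete, with flows $g\mapsto e^{tA}g$, and $\operatorname{Lie}\{X_A,X_B\}$ is the full space of right-invariant fields. I will take $A$ regular semisimple (or even regular with real spectrum) and $B$ generic.

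\textbf{Step 2 (third generator carrying functions).} I will take $W_3=\phi X_C$ with $\phi\in\Phi$ and require $X_C\phi=0$. Then $\phi$ is constant along the flow of $X_C$, so the flow $g\mapsto e^{t\phi(g)C}g$ exists for all time and $W_3$ is complete. For instance, pick $C$ nilpotent and a matrix coefficient $\phi(g)=\langle u,gv\rangle$ with $u^{\top}C=0$; then $X_C\phi(g)=\langle u,Cgv\rangle=0$. Bracketing $W_3$ with right-invariant fields gives $[X_D,\phi X_C]=(X_D\phi)X_C+\phi X_{[D,C]}$. Iterating, and using that $\frakg$ acts irreducibly on each isotypic piece, I will generate $\phi' X_E$ for $\phi'$ in the $\frakg$-submodule generated by $\phi$ and $E$ in $\frakg$. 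To separate the function part from the vector part, I will use the tensor product $\Phi\otimes\frakg$ as a $\frakg$-module and a Casimir/weight-space projection argument with respect to $\operatorname{ad}X_A$ (regular $A$, distinct eigenvalues). The goal is to get all $\psi X_E$ with $\psi\in\Phi$ and $E\in\frakg$, which requires $\phi$ to be a cyclic vector for $\Phi$; I may have to replace $u,v$ by generic choices, or add constants.

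\textbf{Step 3 (closure).} From all $\psi X_E$ with $\psi\in\Phi$, the bracket formula with $\psi'=\psi$ gives $\phi\psi[X_A,X_B]$ modulo terms already obtained. Since $[\frakg,\frakg]=\frakg$, induction on degree yields all monomials in $\Phi$ times $X_E$, and hence all of $\mathfrak X_{\mathrm{alg}}(G)$.

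\textbf{Main obstacle.} The hard part is Step 2: decomposing $\Phi\otimes\frakg$ into isotypic components and extracting each pure product $\psi X_E$ from the submodule generated by a single complete field $\phi X_C$. This submodule is generally a proper $\frakg$-submodule of $\Phi\otimes\frakg$ (for example, it could be the kernel of the contraction map), so I expect to need the degree-two brackets $[\phi X_C,\psi X_D]$ to fill in the missing components. The choice of $C$ and $\phi$ must be made per Lie type, via a Cartan decomposition and a highest weight vector.
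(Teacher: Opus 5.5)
Your Steps 1--2 cannot be completed as designed. The obstruction is structural; it is not a matter of finding the right Casimir or weight projection. Your $W_1,W_2$ produce only right-invariant fields $X_A(g)=Ag$, and these act on $\Phi$ by derivations. Set $\Phi_\phi:=\mathcal{U}(\frakg_R)\phi$ and let $\mathcal{A}(\Phi_\phi)$ be the unital algebra it generates. By your own bracket identity, $\mathcal{A}(\Phi_\phi)\,\frakg_R$ is a Lie subalgebra containing $W_1,W_2,W_3$, so every bracket you can form, of any degree, stays inside it.

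Write $\phi(g)=\tr(Mg)$. Then $\Phi_\phi=\{g\mapsto \tr(MQg) : Q\in\mathcal{A}(\frakg)\}$. Your completeness condition $X_C\phi=0$ says $\tr(MCg)=0$ on $G$, so the nonzero element $C$ lies in the kernel of the surjection $Q\mapsto \tr(MQ\,\cdot)|_G$ from $\mathcal{A}(\frakg)$ onto $\Phi_\phi$. Since $\Phi$ is the dual of the span $\mathcal{A}(G)=\mathcal{A}(\frakg)$ of $G$, this gives $\dim\Phi_\phi<\dim\Phi$. So the mechanism that makes $W_3$ complete is exactly what prevents $\phi$ from being the cyclic vector you say you need.

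This is fatal. Take $G=\mathrm{SL}_n(\R)$ with $n\ge 2$. Then $X_C\phi=0$ means $MC=0$, so $M$ is singular, and each $\tr(MQg)=\tr(QgM)$ depends only on $gM$. Choose $N\neq 0$ with $N^2=0$ and $NM=0$ (e.g.\ $N=w\ell^{\top}$ with $\ell^{\top}M=0$ and $\ell^{\top}w=0$). Then $g$ and $g(I+N)$ are not separated by $\mathcal{A}(\Phi_\phi)$. Since coefficients in the frame $\{X_{A_i}\}$ are unique, $\mathcal{A}(\Phi_\phi)\,\frakg_R\subsetneq\mathfrak X_{\mathrm{alg}}(G)$. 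Concretely, for $\mathrm{SL}_2(\R)$ with $C=E_{12}$ and $\phi=g_{21}$, you never leave $\R[g_{11},g_{21}]\,\frakg_R$. Choosing generic $u,v$ or adding constants does not change this.

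The missing idea is to let two commuting copies of $\frakg$ act. The paper takes $W_1=L_H-R_{cH}$ and $W_2=L_E-R_E$, which generate all of $\frakg_L\oplus\frakg_R\cong\frakg\oplus\frakg$. This is still semisimple; the choice $c\neq 0,\pm1$ and generic $H$ make the $\operatorname{ad}_{W_1}$-eigenvalues on the $L_{E_\alpha},R_{E_\alpha}$ pairwise distinct. Since $[\frakg_L,\frakg_R]=0$, writing fields in the left-invariant frame gives $\mathcal{U}(\frakg_R)(\chi L_Z)=(\mathcal{U}(\frakg_R)\chi)\,L_Z$. This is precisely the separation of function part from vector part that you hoped to extract from $\Phi\otimes\frakg$.

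The third field is $W_3=\chi L_Z$ with $\chi=\tr$ and $Z\in\mathfrak{k}$ having nonzero component in every simple ideal. Completeness comes from the flow staying in the compact set $xK$, not from $L_Z\chi=0$. So $\chi$ is allowed to be cyclic, and it is, by nondegeneracy of the trace form on $\mathcal{A}(\frakg)$. The set of $X$ with $\Phi L_X\subseteq\operatorname{Lie}\{W_1,W_2,W_3\}$ is then an ideal containing $Z$, hence equal to $\frakg$.

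Your Step 3 has a separate gap. In $[\phi X_A,\psi X_B]$ the terms $\phi(X_A\psi)X_B$ and $\psi(X_B\phi)X_A$ have the same degree as $\phi\psi[X_A,X_B]$, so they are not ``already obtained''. Showing that depth-$n$ brackets of $\Phi\frakg_L$ span $\mathcal{A}_n(\Phi)\frakg_L$ is the content of the paper's Theorem~\ref{thm:liealgebra}. That proof needs weight-vector computations with a Chevalley basis, the $\mathfrak{sl}_2$ Lemma~\ref{lem:sl2rep}, a separate treatment of zero weights, and first splitting off the constants in $\Phi$ by a $\frakg$-invariant complement.
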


Throughout the paper, we use $\mathbb{N}$ to denote the set of positive integers and $\mathbb{N}_0 := \N \cup\{0\}$.  
Note that $\mathfrak{X}_{\mathrm{alg}}(G)$ is dense in $\mathfrak{X}(G)$ with respect to the compact-open $\mathrm{C}^k$-topology (or simply
$\cok$-topology) for any $k\in \N_0$ (see Lemma~\ref{lem:smooth-Vfieldapproximate} in Section~\ref{sec:proof-main}). 
Borrowing the terminology from~\cite{Varolin01,TV06}, we say that a real smooth manifold $M$ has the {\it density property} if every smooth vector field on $M$ can be approximated arbitrarily well in the $\cok$-sense, for any $k\in \N_0$, by Lie combinations of complete vector fields. Further, we say that $M$ has the {\it strong density property} if these complete vector fields  can be chosen out of some finite set. 
It then follows directly from Theorem~\ref{thm:main} that any real, connected, semi-simple matrix group $G$ has the strong density property.  

The problem of finding finitely many complete vector fields that generate the entire Lie algebra of algebraic vector fields has mainly been studied for {\it complex} manifolds (in the complex setting, completeness often refers to $\C$-completeness). 
For $\C^d$ with $d \geq 2$,  it has been shown in~\cite{AndristCn} that $\mathfrak{X}_{\mathrm{alg}}(\C^d)$ can be generated by three $\C$-complete vector fields (or simply completely $3$-generated). 
The result has been sharpened in~\cite{BeldievNew} that $\mathfrak{X}_{\mathrm{alg}}(\C^d)$ is completely $2$-generated.  
In~\cite{AndristSLtwo}, it has been shown that $\mathfrak{X}_{\mathrm{alg}}(\mathrm{SL}_2(\C))$ is completely $4$-generated and $\mathfrak{X}_{\mathrm{alg}}(M)$ is completely $5$-generated for $M$ the Danielewski surface $xy = z^2$. Further, in~\cite{And26}, it has been shown that for any Danielewski surface $M$ of the form $xy = p(z)$, where $p(z)$ is a polynomial with simple zeros, $\mathfrak{X}_{\mathrm{alg}}(M)$ is completely $6$-generated. 
Building upon the arguments of the current paper, we have shown in~\cite{CofDComplex} that $\mathfrak{X}_{\mathrm{alg}}(G)$, for any complex classical simple group $G$, is completely $3$-generated. The main result of~\cite{CofDComplex} thus provides an affirmative answer to the question posed in~\cite{AndristSLtwo}, which asked whether there exists a complex simple Lie group  $G$ other than $\mathrm{SL}_2(\C)$ such that $\mathfrak{X}_{\mathrm{alg}}(G)$ is finitely completely  generated.

We now describe a relevant consequence of Theorem~\ref{thm:main}.   
Let $\operatorname{Diff}(G)$ be the group of smooth diffeomorphisms of $G$ and $\operatorname{Diff}_0(G)$ be the connected component of $\operatorname{Diff}(G)$ that contains the identity $\operatorname{id}_G$.      
Let $W_1$, $W_2$, and $W_3$ be given as in the statement of Theorem~\ref{thm:main}, and $\mathcal{W}$ be the subgroup of $\operatorname{Diff}_0(G)$ generated by $e^{tW_i}$, for $t\in \R$ and $i = 1,2,3$.

The following result is a corollary of Theorem~\ref{thm:main}:

\begin{maincor}\label{cor:smoothdiffapproximate}
The subgroup $\mathcal{W}$ is dense in $\operatorname{Diff}_0(G)$ with respect to the $\cok$-topology, for any $k\in \N_0$. 
\end{maincor}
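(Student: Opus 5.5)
We derive Corollary~\ref{cor:smoothdiffapproximate} from Theorem~\ref{thm:main} via the standard Andersén--Lempert-type argument, adapted to the real smooth setting. Fix $\phi\in\operatorname{Diff}_0(G)$, a compact set $K\subseteq G$, $k\in\N_0$ and $\epsilon>0$. We must find $\psi\in\mathcal{W}$ that is $\epsilon$-close to $\phi$ in $\mathrm{C}^k$ on $K$.

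\emph{Step 1: reduce to time-one maps of a time-dependent flow.} Since $\phi$ lies in the identity component, we choose a smooth isotopy $\phi_t$, $t\in[0,1]$, with $\phi_0=\operatorname{id}_G$ and $\phi_1=\phi$. Smoothing a continuous path in $\operatorname{Diff}(G)$ is possible because the group is locally contractible. The isotopy is generated by a time-dependent vector field $X_t=(\tfrac{d}{dt}\phi_t)\circ\phi_t^{-1}$. We only care about a compact neighborhood $K'$ of $\bigcup_t\phi_t(K)$, so we may cut $X_t$ off outside $K'$. We then partition $[0,1]$ finely and approximate $\phi$ on $K$ by a composition $e^{\frac1N X_{t_N}}\circ\cdots\circ e^{\frac1N X_{t_1}}$ of time-$\frac1N$ flows of autonomous, compactly supported (hence complete) fields. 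The $\mathrm{C}^k$ error goes to $0$ as $N\to\infty$ by the usual Euler/product-formula estimate for ODE flows with smooth dependence.

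\emph{Step 2: replace each $X_{t_j}$ by a Lie combination of $W_1,W_2,W_3$.} By Lemma~\ref{lem:smooth-Vfieldapproximate}, $\mathfrak X_{\mathrm{alg}}(G)$ is $\cok$-dense in $\mathfrak X(G)$. By Theorem~\ref{thm:main}, every algebraic field is a finite Lie combination of the $W_i$. Hence each $X_{t_j}$ can be $\mathrm{C}^{k+1}$-approximated on $K'$ by some $Y_j\in\operatorname{Lie}\{W_1,W_2,W_3\}$. Continuous dependence of flows on the vector field, in $\mathrm{C}^k$ on compacts for a fixed finite time and as long as trajectories stay in $K'$, then shows that the composition of the $e^{\frac1N Y_j}$ approximates the composition from Step~1. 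Note that $Y_j$ need not be complete. Its flow is defined for small times on a neighborhood of $K$, however, and this is all we use.

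\emph{Step 3: realize flows of Lie combinations by elements of $\mathcal{W}$.} This is the main point. We use the classical formulas
\[
e^{t(A+B)}=\lim_{n\to\infty}\bigl(e^{\frac tn A}e^{\frac tn B}\bigr)^n,\qquad e^{t^2[A,B]}\approx e^{-tB}e^{-tA}e^{tB}e^{tA}\ \text{(to second order)},
\]
with the commutator version iterated through $\bigl(e^{-\frac{\sqrt{t}}{n}B}e^{-\frac{\sqrt{t}}{n}A}e^{\frac{\sqrt{t}}{n}B}e^{\frac{\sqrt{t}}{n}A}\bigr)^{n^2}$, or an equivalent form. Both limits hold in $\mathrm{C}^k$ on compact sets wherever the flows are defined. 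By induction on the bracket length and on the number of summands in $Y_j$, $e^{sY_j}$ restricted to a compact set is a $\mathrm{C}^k$-limit of finite compositions of the $e^{tW_i}$, and these compositions lie in $\mathcal{W}$ because the $W_i$ are complete. Scalar multiples are handled by rescaling time.

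I expect the main obstacle to be the bookkeeping in Step~3. We must check that the approximating compositions for inner brackets remain close enough, with control on derivatives up to order $k$, for them to serve as inputs to the next level of the product and commutator formulas. We also need uniformity on compacts while the intermediate, possibly incomplete, fields have flows defined only locally. The cleanest way to handle this is an abstract lemma: if $\Phi^A_t$ and $\Phi^B_t$ are families of diffeomorphisms that are $\mathrm{C}^{k+1}$-close to flows with generators $A$ and $B$ (that is, $\Phi_t=\operatorname{id}+tA+O(t^2)$ uniformly in $\mathrm{C}^{k+1}$ on compacts), then the composed families generate $A+B$ and $[A,B]$ in the same sense. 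A Chernoff-type convergence theorem then gives the limit. Finally, combining Steps~1--3 with an $\epsilon/3$ argument yields $\psi\in\mathcal{W}$ with $\|\psi-\phi\|_{\mathrm{C}^k(K)}<\epsilon$.
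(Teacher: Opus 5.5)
Your proposal is correct in outline, but your first step differs from the paper's. The paper cuts off the generator of the isotopy to get $\tilde g\in\operatorname{Diff}_c(G)$ with $\tilde g=g$ near $K$. It then invokes the simplicity of $\operatorname{Diff}_c(G)$: the subgroup generated by time-one maps $e^V$, $V\in\mathfrak{X}_c(G)$, is a nontrivial normal subgroup, so $\tilde g=e^{V_m}\circ\cdots\circ e^{V_1}$ \emph{exactly}. The remaining steps match yours: each $e^{V_j}$ is approximated by $e^{U_j}$ with $U_j\in\operatorname{Lie}\{W_1,W_2,W_3\}$ (Lemma~\ref{lem:smooth-compactflowapproximate}, your Step~2), each $e^{U_j}$ by an element of $\mathcal{W}$ (Lemma~\ref{lem:finitewordsapproximation}, your Step~3), and the results are composed. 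Your frozen-time product $e^{\frac1N X_{t_N}}\circ\cdots\circ e^{\frac1N X_{t_1}}$ replaces the simplicity theorem by an elementary ODE estimate. It costs an extra discretization error and a number of factors that grows as $\epsilon\to 0$. That is harmless because the goal is only approximation, and it makes the argument self-contained. The paper's route is shorter, at the price of a deep black box.

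In Step~3 the paper also avoids the $\mathrm{C}^k$ bookkeeping you worry about. It passes to the vertical prolongations $j^kW_i$ on $J^k(G,G)$. These are complete, satisfy $j^k[U,V]=[j^kU,j^kV]$, and their flows encode the $k$-jets of the flows on $G$. So the $\mathrm{C}^0$ Varolin--Forstneri\v{c} argument on $J^k(G,G)$ directly gives $\cok$ approximation on $G$.

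Even without that trick, your obstacle largely disappears with a different order of operations. Apply the Trotter and commutator formulas only to genuine local flows $e^{\tau A}$, $e^{\tau B}$, and fix $n$. Only then replace each of the finitely many factors by an element of $\mathcal{W}$ supplied by the induction hypothesis; continuity of composition on compacts finishes the step.

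If you keep your Chernoff-type lemma, its hypothesis $\Phi_t=\operatorname{id}+tA+O(t^2)$ in $\mathrm{C}^{k+1}$ is not reproduced by the commutator step. After reparametrizing $t=s^2$ the remainder is only $O(t^{3/2})$, and each bracket costs a derivative. So state the lemma with an $o(t)$ remainder controlled in every $\mathrm{C}^r$ norm on compacts. Also build commutators from the true inverses $\Phi_s^{-1}$, which again lie in $\mathcal{W}$, rather than from $\Phi_{-s}$. For composite families such as $\Phi^A_s\circ\Phi^B_s$ the two differ at order $s^2$, which would corrupt the bracket term.
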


In~\cite{AC09}, it has been shown that if $V_1,\ldots, V_m$ are smooth vector fields on a compact manifold $M$ and if the group generated by $e^{tV_i}$, for $t\in \R$ and $i = 1,\ldots, m$, acts transitively on $M$, then for any $f\in\operatorname{Diff}_0(M)$, there exist an $N\in \N$, a sequence $i_1,\ldots, i_N\in \{1,\ldots, m\}$, and $a_1,\ldots, a_N\in \mathrm{C}^\infty(M)$ such that $f = e^{a_N V_{i_N}}\circ \cdots \circ e^{a_1 V_{i_1}}$. 
However, their result does not imply ours as the coefficients $a_1,\ldots, a_N$ in our setting can only be constants.

We further relate the above corollary to geometric control theory. 
Let $J^k(G,G)$ be the manifold of $k$-jets of smooth maps from $G$ to $G$. We denote by $j_x^kf$ the $k$-jet of $f\in \mathrm{C}^\infty(G, G)$ at $x\in G$. 
We embed $J^k(G,G)$ into a Euclidean space $\R^{N_k}$.  
For smooth $G$-valued maps $f,g$ defined on an open neighborhood of a compact set $K$ in $G$, let
$$
d_{k,K}(f,g):=\max_{x\in K}\left\|j_x^k f-j_x^k g\right\|,
$$
where $\|\cdot\|$ is the standard Euclidean norm. 
Note that the topology on $\operatorname{Diff}(G)$ induced by the above semimetrics for all compact sets $K$, coincides with the $\cok$-topology.    

Now, consider the following control-linear system on the group $G$: 
\begin{equation}\label{eq:controlsystem}
\dot x(t) = \sum_{i = 1}^3 u_i(t) W_i(x(t)),
\end{equation}
where each $u_i: [0,\infty)\to \R$, for $i = 1,2,3$, is a locally integrable function, viewed as a scalar control input. For ease of notation, let $u:=(u_1,u_2, u_3)$. 
System~\eqref{eq:controlsystem} is said to be {\it approximately controllable} on $\operatorname{Diff}_0(G)$ in the $\cok$-sense over the time interval $[0,T]$, for a given $T > 0$, if for any $f\in \operatorname{Diff}_0(G)$, any compact set $K\subseteq G$, and any error tolerance $\epsilon > 0$, there exists a $u\in \mathrm{L}^1([0,T],\R^3)$ such that the flow $g_t$ generated by the time-varying vector field $U(x,t):=\sum_{i = 1}^3 u_i(t) W_i(x)$ is well defined on an open neighborhood of $K$ in $G$ for all $t\in [0,T]$ and, moreover, $d_{k,K}(f, g_T) < \epsilon$. 
Since system~\eqref{eq:controlsystem} is control-linear,  approximate controllability over $[0,T]$ for some $T > 0$ implies approximate 
controllability over $[0,T]$ for {\it any} $T > 0$.

The following result is then an immediate consequence of Corollary~\ref{cor:smoothdiffapproximate}:

\begin{controlcor}
System~\eqref{eq:controlsystem} is approximately controllable on $\operatorname{Diff}_0(G)$ in the $\cok$-sense for all $k\in \N_0$. 
\end{controlcor}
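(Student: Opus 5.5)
The plan is to deduce the corollary from Corollary~\ref{cor:smoothdiffapproximate} using the control-linear structure of system~\eqref{eq:controlsystem}. Fix $k\in\N_0$, $f\in\operatorname{Diff}_0(G)$, a compact set $K\subseteq G$, $\epsilon>0$ and $T>0$. By Corollary~\ref{cor:smoothdiffapproximate}, and because the semimetrics $d_{k,K}$ induce the $\cok$-topology, there is an element $h\in\mathcal{W}$ with $d_{k,K}(f,h)<\epsilon$. Since $\mathcal{W}$ is generated by the complete flows $e^{tW_i}$, we can write
\[
h=e^{t_N W_{i_N}}\circ\cdots\circ e^{t_1 W_{i_1}}
\]
for some $N\in\N$, indices $i_j\in\{1,2,3\}$ and reals $t_j\in\R$.

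To realize $h$ as the time-$T$ map of~\eqref{eq:controlsystem}, I will use a piecewise-constant control. Split $[0,T]$ into $N$ consecutive subintervals of length $T/N$. On the $j$-th subinterval, set $u_{i_j}\equiv t_jN/T$ and set the other two components to zero. On that subinterval the time-varying field is $U=(t_jN/T)W_{i_j}$. Its flow over a duration $T/N$ equals $e^{t_jW_{i_j}}$, because reparametrizing time rescales the vector field.

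Each $W_i$ is complete, so these flows are globally defined diffeomorphisms of $G$. Hence the flow $g_t$ is well defined on all of $G$, not merely near $K$, for every $t\in[0,T]$, and $g_T=h$. The control $u$ is bounded and piecewise constant, so it lies in $\mathrm{L}^1([0,T],\R^3)$. This gives $d_{k,K}(f,g_T)<\epsilon$, which is approximate controllability over $[0,T]$.

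Since $T>0$ was arbitrary, the conclusion holds for every horizon. This also makes the remark about independence of $T$ unnecessary here.

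I expect no real obstacle; everything is bookkeeping. The only points to check are, first, that the semimetrics $d_{k,K}$ generate the $\cok$-topology, so that density in Corollary~\ref{cor:smoothdiffapproximate} yields an $h$ with $d_{k,K}(f,h)<\epsilon$; this is stated in the text. Second, that concatenating constant-control flows gives the solution of the time-varying ODE in the Carathéodory sense, which is standard.
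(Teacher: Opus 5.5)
Your proposal is correct and follows essentially the same route as the paper. The paper states this result as an immediate consequence of Corollary~\ref{cor:smoothdiffapproximate} without further detail. Your piecewise-constant, time-rescaled realization of a word $e^{t_N W_{i_N}}\circ\cdots\circ e^{t_1 W_{i_1}}\in\mathcal{W}$, using completeness of each $W_i$, is exactly the routine step the paper leaves implicit, and it also accounts for the paper's remark that the horizon $T$ is irrelevant for this control-linear system.
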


For relevant work about controllability of diffeomorphisms, we mention~\cite{AS22} in which the authors have considered the class of control-linear systems on Riemannian manifolds~$M$. In particular, they have introduced the so-called ``strong Lie algebra approximating property'' (Assumption 3), which, roughly speaking, serves as the same purpose of Theorem~\ref{thm:main} (although the two conditions are significantly different).  Assuming that the control vector fields and their covariant derivatives of each other are uniformly bounded and that the aforementioned property is satisfied, the authors have shown that the control-linear system is approximately controllable on $\operatorname{Diff}_0(M)$ in the $\mathsf{co}^0$-sense.

The remainder of the paper is organized as follows: 
In Section~\ref{sec:three-generators}, we construct the three complete vector fields $W_1$, $W_2$, and $W_3$ and show that the space $\Phi\frakg_L + \frakg_L$ is contained in $\operatorname{Lie}\{W_1,W_2,W_3\}$, where $\frakg_L$ is the Lie algebra of left-invariant vector fields on~$G$. Then, in Section~\ref{sec:degree-growth}, 
we investigate the Lie algebra generated by $\Phi\frakg_L$ and show that the space of Lie products of depth $n$ is spanned by $\mathcal{A}_n(\Phi)\frakg_L$, where $\mathcal{A}_n(\Phi)$ is spanned by products of~$n$ arbitrary elements of~$\Phi$.  
Finally, in Section~\ref{sec:proof-main}, we prove Theorem~\ref{thm:main} and Corollary~\ref{cor:smoothdiffapproximate}.

\section{The three complete generators}
\label{sec:three-generators}

We identify $\frak{g}_L$ with $\frakg$, and denote by $\frakg_R$ the space of right-invariant vector fields. For a given $X\in \frakg$, let $L_X$ (resp., $R_X$) be the corresponding left-invariant (resp., right-invariant) vector field. 
Note that every vector field $V$ in $\frakg_L \oplus \frakg_R$ is algebraic and complete; indeed, for $V= L_X + R_Y$, we have that $e^{tV}(x) = e^{Yt}xe^{Xt}$ for any $x\in G$ and any $t\in \R$.  
Also, note that $[\frakg_L, \frakg_R] = 0$. 
Since $\frakg_L \oplus \frakg_R \approx \frakg \oplus \frakg$ is a semi-simple Lie algebra  (where $L_X + R_Y \mapsto (X,-Y)$ is a Lie algebra isomorphism), it is an immediate consequence of~\cite[Theorem~6]{Kuranishi1951} that there exist $W_1$ and $W_2\in \mathfrak g_L\oplus \mathfrak g_R$ such that $\operatorname{Lie}\{W_1,W_2\} = \mathfrak g_L\oplus\mathfrak g_R$. We construct explicitly these two vector fields.

Let $\frakg^\C$ be the complexification of $\frakg$, and  $\sigma:\frakg^\C\to\frakg^\C$ be the conjugation with respect to the real form~$\frakg$.
Let $\frakh$ be a Cartan subalgebra of $\frakg$ and  $\frakh^\C$ be its complexification.
Let $\Delta$ be the root system associated with $(\frakg^\C,\frakh^\C)$, and 
$\frakg^\C=\frakh^\C\oplus\bigoplus_{\alpha\in\Delta}(\frakg^\C)_\alpha$ be the root space decomposition of $\frakg^\C$. 
For each $\alpha\in\Delta$, let $\alpha^\sigma\in (\frakh^\C)^*$ be defined by $\alpha^\sigma(H):=\overline{\alpha(\sigma(H))}$ for any $H\in\frakh^\C$. 
It turns out that $\alpha^\sigma$ is a root, with its root space given by
$(\frakg^\C)_{\alpha^\sigma} = \sigma((\frakg^\C)_\alpha)$; indeed, since $\sigma$ is a conjugate-linear Lie algebra involution, 
$[H, \sigma(X)] = \overline{\alpha(\sigma(H))} \sigma(X)$ for any $H\in \frakh^\C$ and any $X\in \frakg_{\alpha}^\C$.

Let $c\in\R\setminus\{0,1,-1\}$, and $H\in\frakh$ be chosen such that 
$(\alpha - \beta)(H)\neq 0$ and $(\alpha - c\beta)(H) \neq 0$  for any two distinct roots $\alpha,\beta$. 
It follows that the numbers in the set $\{\alpha(H), c\alpha(H)\mid \alpha\in \Delta\}$ are pairwise distinct. 
Next, choose nonzero elements $E_\alpha\in \frakg^\C_\alpha$, for each $\alpha\in \Delta$, such that $\sigma(E_\alpha)=E_{\alpha^\sigma}$. 
We then define $E:=\sum_{\alpha\in\Delta} E_\alpha$. 
Since $\sigma(E_\alpha) = E_{\alpha^\sigma}$, we have that $\sigma(E)=E$ and hence, $E\in\frakg$.

With $c$, $H$, and $E$ given above, we now have the following lemma:

\begin{lemma}\label{lem:firsttwo}
Let $W_1:=L_H-R_{cH}$ and $W_2:=L_E-R_E$. 
Then, 
$\operatorname{Lie}\{W_1,W_2\}=\frakg_L\oplus\frakg_R$.
\end{lemma}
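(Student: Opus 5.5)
Transport the problem through the Lie algebra isomorphism $\frakg_L\oplus\frakg_R\to\frakg\oplus\frakg$, $L_X+R_Y\mapsto(X,-Y)$. Under it, $W_1\mapsto(H,cH)$ and $W_2\mapsto(E,E)$, so it suffices to show that these two elements generate $\frakg\oplus\frakg$ as a real Lie algebra. I will work in the complexification $\frakg^\C\oplus\frakg^\C$. Write $\mathfrak{l}:=\operatorname{Lie}_\R\{(H,cH),(E,E)\}$. Then the complex Lie algebra generated by the same two elements equals $\mathfrak{l}\otimes\C=\mathfrak{l}+i\mathfrak{l}$, because the complex span of real Lie brackets is closed under brackets. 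Hence if that complex Lie algebra is all of $\frakg^\C\oplus\frakg^\C$, then $\dim_\R\mathfrak{l}=2\dim\frakg$, and so $\mathfrak{l}=\frakg\oplus\frakg$.

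\emph{Step 1: isolate the root vectors.} Decompose
\[
(E,E)=\sum_{\alpha\in\Delta}(E_\alpha,0)+\sum_{\alpha\in\Delta}(0,E_\alpha).
\]
Each summand is an eigenvector of $\operatorname{ad}_{(H,cH)}$: $(E_\alpha,0)$ has eigenvalue $\alpha(H)$ and $(0,E_\alpha)$ has eigenvalue $c\alpha(H)$. By the choice of $c$ and $H$, these $2|\Delta|$ eigenvalues are pairwise distinct.

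Consider the iterates $\operatorname{ad}^k_{(H,cH)}(E,E)$ for $k=0,\dots,2|\Delta|-1$. The matrix expressing them in terms of the $2|\Delta|$ eigen-components is a Vandermonde matrix in these distinct eigenvalues, hence invertible. Inverting it over $\C$ shows that every $(E_\alpha,0)$ and every $(0,E_\alpha)$ lies in $\mathfrak{l}\otimes\C$.

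\emph{Step 2: generate everything.} For each $\alpha\in\Delta$, the bracket $[E_\alpha,E_{-\alpha}]$ is a nonzero multiple of the coroot $H_\alpha$, because $E_{\pm\alpha}\ne0$ and root spaces are one-dimensional. These coroots span $\frakh^\C$. Together with the root spaces, this shows that the $E_\alpha$ generate $\frakg^\C$. Applying this separately in each factor, the elements $(E_\alpha,0)$ and $(0,E_\alpha)$ generate $\frakg^\C\oplus 0$ and $0\oplus\frakg^\C$. Hence $\mathfrak{l}\otimes\C=\frakg^\C\oplus\frakg^\C$, and the dimension count above gives the lemma.

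\emph{Main obstacle.} The step needing the most care is Step 1. One must verify that the eigenvalues are genuinely pairwise distinct, including across the two factors, where $\alpha(H)\ne c\beta(H)$ is needed for all roots $\alpha,\beta$. The paper asserts this follows from the hypotheses on $H$ and $c$; a check of the case $\beta=\alpha$ is included in what follows. The other ingredient is the passage between real and complex generation. Note also that nonvanishing of the eigenvalues, i.e.\ $\alpha(H)\neq0$, follows from $(\alpha-(-\alpha))(H)=2\alpha(H)\ne0$, but it is not even needed for the Vandermonde argument, which only requires distinctness. In the case $\beta=\alpha$, we need $\alpha(H)\ne c\alpha(H)$. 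This holds because $c\ne1$ and $\alpha(H)\neq0$.
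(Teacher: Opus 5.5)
Your proof is correct and follows essentially the same route as the paper: you complexify, apply a Vandermonde argument to the iterates $\operatorname{ad}^m_{W_1}W_2$ with the pairwise distinct eigenvalues $\alpha(H)$ and $c\alpha(H)$ to isolate the root vectors in each factor, and then recover the Cartan part from the brackets $[E_\alpha,E_{-\alpha}]$. Your additional details fill in steps the paper leaves implicit: the transport to $\frakg\oplus\frakg$, the dimension count that justifies passing from complex to real generation, and the check that $\alpha(H)\neq c\alpha(H)$ via $2\alpha(H)=(\alpha-(-\alpha))(H)\neq 0$ and $c\neq 1$.
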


\begin{proof}
For convenience, let $\mathfrak{s}:=\operatorname{Lie}\{W_1,W_2\}$. 
It suffices to establish the result for the complexified version, i.e., we show that $\mathfrak{s}^\C = \frakg^\C_L \oplus \frakg^\C_R$.   
For any $\alpha\in\Delta$, 
$$
[W_1,L_{E_\alpha}]=\alpha(H)L_{E_\alpha} \quad \mbox{and} \quad 
[W_1, R_{E_\alpha}]=c\alpha(H) R_{E_\alpha}. 
$$ 
Thus, $L_{E_\alpha}$ and $R_{E_\alpha}$ are eigenvectors of $\operatorname{ad}_{W_1}:= [W_1,\cdot]$ with eigenvalues $\alpha(H)$ and $c\alpha(H)$, respectively. 
By repeatedly applying $\operatorname{ad}_{W_1}$ to $W_2$, we obtain that for any $m\in\N_0$, 
$$
\operatorname{ad}^m_{W_1}W_2= \sum_{\alpha\in\Delta}\alpha(H)^mL_{E_\alpha}-\sum_{\alpha\in\Delta}(c\alpha(H))^mR_{E_\alpha}.
$$
By our choice of $H$, the $2|\Delta|$ numbers $\alpha(H)$ and $c\alpha(H)$, for $\alpha\in\Delta$, are pairwise distinct. Thus, 
$W_2,\operatorname{ad}_{W_1}W_2,\ldots,\operatorname{ad}_{W_1}^{2|\Delta|-1}W_2$ are linearly independent, which in turn implies that 
$L_{E_\alpha},R_{E_\alpha}\in\mathfrak{s}^\C$ for every $\alpha\in\Delta$. Finally, note that $[L_{E_\alpha}, L_{E_{-\alpha}}]$, for $\alpha \in \Delta$, span $\{L_H \mid H\in \frakh^\C\}$ and the same applies for the right-invariant vector fields. We then conclude that $\mathfrak{s}^\C=\frakg_L^\C\oplus\frakg_R^\C$. 
\end{proof}

We next construct the third vector field~$W_3$. Decompose
$\mathfrak g=\oplus_{\ell = 1}^k \mathfrak g_\ell$, where each $\frakg_\ell$ is a simple ideal of~$\frakg$. For each $\ell$, let 
$\mathfrak g_\ell=\mathfrak k_\ell\oplus\mathfrak p_\ell$ be a Cartan decomposition, with $\mathfrak{k}_\ell$ a maximally compact subalgebra of $\frakg_\ell$.  
Let $Z_\ell\in \mathfrak{k}_\ell
$ be a nonzero element, and 
\begin{equation}\label{eq:defineZ}
Z:= \sum_{\ell = 1}^k Z_\ell.
\end{equation} 
Let $\chi$  be the character of the standard representation~$\mathbb{V}$, i.e., $\chi(x)$ is the trace of $x\in \operatorname{End}(\mathbb{V})$.   
We then define
\begin{equation}\label{eq:defineY3}
W_3(x):= \chi(x) L_Z(x). 
\end{equation}
It is clear that $\chi\in \Phi$, so $W_3\in \mathfrak{X}_{\mathrm{alg}}(G)$. 
We have the following fact:

\begin{lemma}
    The vector field $W_3$ is complete.
\end{lemma}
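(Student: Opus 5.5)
We need to show that the flow of $W_3 = \chi L_Z$ exists for all time. The idea is to show that $\chi$ is constant along integral curves of $L_Z$, and hence along integral curves of $W_3$. Each integral curve of $W_3$ is then a time-reparametrized integral curve of $L_Z$ with constant speed factor, so it exists for all $t\in\R$.

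\emph{Step 1: the integral curves.} Since $\chi L_Z$ is a function multiple of $L_Z$, its integral curves lie along those of $L_Z$. Write the candidate flow through $x\in G$ as
$$
\gamma(t)=x e^{s(t) Z}, \qquad \dot s = \chi\bigl(x e^{sZ}\bigr),\quad s(0)=0 .
$$
Indeed, $\dot\gamma = \gamma\,\dot s Z = \chi(\gamma) L_Z(\gamma)$. So it suffices to show that the scalar ODE for $s$ has a global solution.

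\emph{Step 2: $\chi$ is invariant along the $L_Z$-flow.} Compute
$$
\frac{d}{ds}\,\chi\bigl(xe^{sZ}\bigr)=\tr\bigl(xe^{sZ}Z\bigr).
$$
This is not obviously zero for general $x$, so a direct invariance argument fails. Instead, exploit compactness. Each $Z_\ell\in\mathfrak k_\ell$, and $\operatorname{ad}_{Z}$ is skew with respect to an inner product, but more to the point we want $s\mapsto \chi(xe^{sZ})$ to be \emph{bounded}. The plan is to show that $e^{sZ}$ stays in a compact subset of $\operatorname{GL}(\mathbb V)$. Since $\mathfrak k=\oplus\mathfrak k_\ell$ is a compactly embedded subalgebra of $\frakg$, the image of $\mathfrak k$ under the standard representation of a semisimple $\frakg$ consists of elements that are semisimple with purely imaginary eigenvalues. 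This holds because the representation restricted to $\mathfrak k$ integrates on the simply connected cover, and the complexified representation restricts to a representation of the compact form $\mathfrak k\oplus i\mathfrak p$, under which one can choose an invariant Hermitian inner product on $\mathbb V^\C$. Hence $Z$ is diagonalizable over $\C$ with imaginary eigenvalues, and $\{e^{sZ}: s\in\R\}$ is bounded. Consequently $|\chi(xe^{sZ})|\le C_x$ for all $s$, with $C_x=\sup_s\|x e^{sZ}\|\cdot \dim\mathbb V$.

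\emph{Step 3: global existence.} The scalar ODE $\dot s = F(s):=\chi(xe^{sZ})$ has a smooth right-hand side $F:\R\to\R$ that is globally bounded by Step 2. A bounded smooth vector field on $\R$ is complete, since $|s(t)|\le C_x|t|$ prevents blow-up. Thus $s(t)$ is defined for all $t\in\R$, and $\gamma(t)=xe^{s(t)Z}\in G$ is defined for all $t$. By uniqueness of integral curves, $\gamma$ is the integral curve of $W_3$ through $x$, so $W_3$ is complete.

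\emph{Main obstacle.} The key step is Step 2, namely the boundedness of $e^{sZ}$ in $\operatorname{End}(\mathbb V)$. This must be justified via the unitary trick: the representation $\frakg^\C\to\operatorname{End}(\mathbb V^\C)$ restricts to the compact real form $\mathfrak u=\mathfrak k\oplus i\mathfrak p$. The corresponding simply connected group is compact, so it admits an invariant Hermitian form on $\mathbb V^\C$, which makes every element of $\mathfrak k$ skew-Hermitian. I expect this to go through directly, since $\mathfrak k\subset\mathfrak u$. Note that only boundedness of $\chi$ along $L_Z$-orbits is needed, not invariance, so we never need the closure of $\{e^{sZ}\}$ to be a torus inside $G$.
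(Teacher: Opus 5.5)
Your proof is correct, but it gets the key compactness from a different source than the paper.

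\textbf{What is shared.} Both arguments start from the same reparametrization $e^{tW_3}(x)=x\exp(s(t)Z)$ with $\dot s=\chi(xe^{sZ})$.

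\textbf{The paper's route.} It invokes the global Cartan decomposition $G=K\exp\mathfrak{p}$ and uses that the subgroup $K$ with Lie algebra $\mathfrak{k}$ is compact. The integral curve therefore stays in the compact set $xK$, and the Escape Lemma finishes the proof.

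\textbf{Your route.} You apply Weyl's unitary trick to the compact real form $\mathfrak{k}\oplus i\mathfrak{p}$ of $\frakg^\C$. This shows that $Z$ acts on $\mathbb{V}^\C$ by a skew-Hermitian operator for a suitable Hermitian form, so $\{e^{sZ}\mid s\in\R\}$ is bounded in $\operatorname{End}(\mathbb{V})$. You then replace the Escape Lemma with the elementary fact that a bounded smooth vector field on $\R$ is complete.

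\textbf{What each buys.} The paper's argument is shorter. However, it rests on the compactness of $K$, which holds because a connected linear semisimple group has finite center. This is exactly where the matrix-group hypothesis enters; for the universal cover of $\mathrm{SL}_2(\R)$, for instance, $K$ is not compact. Your argument needs neither the global Cartan decomposition nor the finite-center fact. It only uses that the standard representation of $\frakg$ restricts to a representation of a compact real form. It also makes explicit that all that matters is boundedness of $\chi$ along $L_Z$-orbits.

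\textbf{One presentational fix.} Your opening paragraph announces that $\chi$ is constant along $L_Z$-orbits. That is false in general, and you abandon it yourself in Step~2, so the overview should say ``bounded'' rather than ``constant.''
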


\begin{proof}
Let $G = K\exp{\mathfrak{p}}$ be the global Cartan decomposition of $G$, where $\mathfrak{p} := \oplus_{\ell = 1}^k \mathfrak{p}_\ell$ and $K$ is the compact subgroup of $G$ whose Lie algebra is $\mathfrak{k} := \oplus_{\ell = 1}^k \mathfrak{k}_\ell$. For any $x\in G$, assuming that $e^{s W_3}(x)$ exists for all $s\in [0,t]$, we have that 
$$e^{t W_3}(x) = x\exp\left (\int_0^t \chi(e^{s W_3}(x)) ds  Z \right ).$$
Since $Z\in \mathfrak{k}$, $e^{t W_3}(x)$ belongs to the compact set~$xK$. 
It follows from the Escape Lemma~\cite[Lemma~9.19]{GTM218} that the maximal integral curve $\gamma: t \mapsto e^{t W_3}(x)$ must be well defined for all $t\in \R$ and for all $x\in G$.  
\end{proof}

Let $\Phi\frakg_L$ be the space of vector fields spanned by $\phi L_X$ for $\phi\in \Phi$ and $X\in \frakg$. 
The main result of the section is the following theorem:

\begin{theorem}
\label{thm:degreeone}
It holds that 
$\Phi\mathfrak g_L\subseteq\operatorname{Lie}\{W_1,W_2,W_3\}.$
\end{theorem}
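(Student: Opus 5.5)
The plan is to use Lemma~\ref{lem:firsttwo}, which gives $\frakg_L\oplus\frakg_R\subseteq\operatorname{Lie}\{W_1,W_2,W_3\}$, and then to bracket $W_3=\chi L_Z$ repeatedly with right-invariant and left-invariant fields. Two identities drive everything. Write a matrix coefficient as $\phi_A(x):=\tr(Ax)$ for $A\in\operatorname{End}(\mathbb V)$. Since $[\frakg_L,\frakg_R]=0$, we have
$$[R_Y,\phi L_X]=(R_Y\phi)L_X \quad\text{and}\quad [L_Y,\phi L_X]=(L_Y\phi)L_X+\phi L_{[Y,X]}.$$
Moreover, $R_Y\phi_A=\phi_{AY}$ and $L_Y\phi_A=\phi_{YA}$, because $R_Y(x)=Yx$ and $L_Y(x)=xY$. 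In particular, $\Phi$ is invariant under both $L_Y$ and $R_Y$.

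\textbf{Step 1: obtain $\Phi L_Z$.} Start from $W_3=\phi_I L_Z$ with $I$ the identity, and apply $\operatorname{ad}_{R_{Y_1}},\dots,\operatorname{ad}_{R_{Y_m}}$. This gives $\phi_{Y_m\cdots Y_1}L_Z$ for all $m\ge0$ and $Y_i\in\frakg$. Hence $\phi_A L_Z\in\operatorname{Lie}\{W_1,W_2,W_3\}$ for every $A$ in the unital associative subalgebra $\mathcal A\subseteq\operatorname{End}(\mathbb V)$ generated by $\frakg$.

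It remains to show that $\{\phi_A\mid A\in\mathcal A\}$ is all of $\Phi$. Since $G$ is connected, it is generated by $\exp\frakg$, so $\operatorname{span}(G)=\mathcal A$. The function $\phi_A|_G$ therefore depends only on the functional $\tr(A\,\cdot)|_{\mathcal A}$, and $\Phi\cong\mathcal A^*$. The claim thus reduces to showing that the trace form $(A,B)\mapsto\tr(AB)$ is nondegenerate on $\mathcal A$.

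Because $\frakg$ is semisimple, $\mathbb V$ is completely reducible, so $\mathcal A$ is a semisimple associative algebra. Suppose $a\in\mathcal A$ satisfies $\tr(ab)=0$ for all $b\in\mathcal A$. Then $\tr((ab)^n)=0$ for all $n$, so $ab$ is nilpotent for every $b$. Thus $\mathcal A a$ is a nil left ideal, which lies in the Jacobson radical and is therefore zero. Hence $a=0$, the trace form is nondegenerate, and $\Phi L_Z\subseteq\operatorname{Lie}\{W_1,W_2,W_3\}$.

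\textbf{Step 2: spread from $Z$ to all of $\frakg$.} For $\phi\in\Phi$ and $Y\in\frakg$, the bracket $[L_Y,\phi L_Z]=(L_Y\phi)L_Z+\phi L_{[Y,Z]}$ has its first term already in $\Phi L_Z$ by Step 1. Hence $\phi L_{[Y,Z]}$ lies in the Lie algebra. Iterating, we get $\Phi L_X$ for every $X$ in $\mathfrak i:=\operatorname{span}\{\operatorname{ad}_{Y_1}\cdots\operatorname{ad}_{Y_m}Z\}$, the ideal of $\frakg$ generated by $Z$. At each stage the correction term $(L_Y\phi)L_{X'}$ is already known, because $\Phi$ is $L_Y$-invariant; the induction runs on the length $m$.

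Every ideal of the semisimple algebra $\frakg$ is a sum of some of the simple ideals $\frakg_\ell$. By \eqref{eq:defineZ}, the projection of $Z$ onto each $\frakg_\ell$ is $Z_\ell\neq0$, so $\mathfrak i$ contains every $\frakg_\ell$ and $\mathfrak i=\frakg$. This yields $\Phi\frakg_L\subseteq\operatorname{Lie}\{W_1,W_2,W_3\}$.

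The main obstacle is the final part of Step 1: showing that right-multiplication words generate all of $\Phi$ rather than only a proper subspace. This is subtle when $\mathbb V$ is reducible or not absolutely irreducible, since then $\mathcal A\neq\operatorname{End}(\mathbb V)$. The argument above, combining the identification $\Phi\cong\mathcal A^*$ with nondegeneracy of the trace form on a semisimple algebra, is how I would handle it. The remaining steps are bookkeeping of the bracket identities.
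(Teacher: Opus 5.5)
Your proposal is correct and follows the paper's proof essentially step for step. You use Lemma~\ref{lem:firsttwo} to get $\frakg_L\oplus\frakg_R$, then right-invariant brackets on $W_3$ together with $\operatorname{span}(G)=\mathcal{A}(\frakg)$ and nondegeneracy of the trace form to get $\Phi L_Z$ (the content of Lemmas~\ref{lem:representation-algebra}--\ref{lem:trace-coefficient-cyclic}), and finally the ideal argument with every $Z_\ell\neq 0$. The only difference is that you prove nondegeneracy directly (a kernel element $a$ makes $\mathcal{A}a$ a nil left ideal, which must vanish in the semisimple algebra $\mathcal{A}$), whereas the paper complexifies and cites Brown--Yakimov; your version is self-contained and equally valid.
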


Let $\mathcal{A}(G)$ be the subspace of $\operatorname{End}(\mathbb{V})$ spanned by the elements of $G$. 
Note that $\mathcal{A}(G)$ is closed under multiplication; indeed, for any two elements $u := \sum_{i} a_i x_i$ and $v := \sum_{j} b_j y_j$, with $a_i, b_j\in \R$ and $x_i,y_j\in G$, 
$$
uv= \sum_{i,j} a_ib_j (x_iy_j) \in \mathcal{A}(G).
$$
It follows that $\mathcal{A}(G)$ is a unital subalgebra of $\operatorname{End}(\mathbb{V})$. 
Next, let $\mathcal{A}(\frakg)\subseteq \operatorname{End}(\mathbb{V})$ be the unital algebra generated by the elements of $\frakg$, known as the matrix representation of the universal enveloping algebra  of $\frakg$. We need the following lemma:

\begin{lemma}\label{lem:representation-algebra}
The two algebras $\mathcal{A}(G)$ and $\mathcal{A}(\frakg)$ are identical. 
\end{lemma}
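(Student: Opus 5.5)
We prove the two inclusions $\mathcal{A}(\frakg)\subseteq\mathcal{A}(G)$ and $\mathcal{A}(G)\subseteq\mathcal{A}(\frakg)$ separately. In both directions we use that each algebra is a finite-dimensional linear subspace of $\operatorname{End}(\mathbb{V})$, hence topologically closed.

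For $\mathcal{A}(\frakg)\subseteq\mathcal{A}(G)$, it suffices to show $\frakg\subseteq\mathcal{A}(G)$, since $\mathcal{A}(G)$ is a unital subalgebra and $\mathcal{A}(\frakg)$ is the smallest one containing $\frakg$. For $X\in\frakg$, the curve $t\mapsto e^{tX}$ lies in $G\subseteq\mathcal{A}(G)$. Therefore the difference quotients $(e^{tX}-I)/t$ lie in $\mathcal{A}(G)$, using $I\in G$. Because $\mathcal{A}(G)$ is closed, their limit $X$ as $t\to 0$ lies in $\mathcal{A}(G)$.

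For $\mathcal{A}(G)\subseteq\mathcal{A}(\frakg)$, we first note that for $X\in\frakg$ the partial sums of the exponential series for $e^X$ lie in $\mathcal{A}(\frakg)$. The series converges in $\operatorname{End}(\mathbb{V})$ and $\mathcal{A}(\frakg)$ is closed, so $e^X\in\mathcal{A}(\frakg)$. Since $G$ is connected, it is generated as a group by $\exp(\frakg)$. Every $x\in G$ is therefore a finite product $e^{X_1}\cdots e^{X_m}$, which lies in $\mathcal{A}(\frakg)$ because $\mathcal{A}(\frakg)$ is closed under multiplication. Taking linear spans gives $\mathcal{A}(G)\subseteq\mathcal{A}(\frakg)$.

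None of the steps is hard. The only points that need care are closedness, which follows from finite-dimensionality, and the standard fact that a connected Lie group is generated by the image of the exponential map, since any neighborhood of the identity generates a connected group.
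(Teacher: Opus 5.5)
Your proposal is correct and follows essentially the same argument as the paper. Difference quotients of $e^{tX}$, together with closedness of the finite-dimensional subspace $\mathcal{A}(G)$, give $\frakg\subseteq\mathcal{A}(G)$; convergence of the exponential series in $\mathcal{A}(\frakg)$, together with generation of the connected group $G$ by $\exp(\frakg)$, gives $G\subseteq\mathcal{A}(\frakg)$.
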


\begin{proof}
For any $X\in\mathfrak g$, $\exp(tX)\in \mathcal{A}(G)$ for any $t\in \R$. 
Since $\mathcal{A}(G)$ is closed, we obtain that $$X = \lim_{t\to 0} \frac{1}{t}(\exp(tX) - I)\in \mathcal{A}(G).$$ Since $\mathcal{A}(G)$ is a unital algebra, $\mathcal{A}(\frakg)\subseteq \mathcal{A}(G)$.  
Conversely, $\mathcal{A}(\frakg)$ is a linear subspace of $\operatorname{End}(\mathbb{V})$ and hence, is closed. Thus, for every $X\in\mathfrak g$, the infinite series $\exp(X) = \sum_{k= 0}^\infty \frac{1}{k!} X^k$ is an element of~$\mathcal{A}(\frakg)$. 
The subgroup of $G$ generated by $\exp(X)$, for all $X\in \frakg$, contains an open neighborhood of the identity and hence, is an open subgroup of $G$. Since $G$ is connected, this subgroup coincides with~$G$.  It follows that $G\subseteq \mathcal{A}(\frakg)$. Since $\mathcal{A}(\frakg)$ is a vector space, we conclude that $\mathcal{A}(G)\subseteq \mathcal{A}(\frakg)$. 
\end{proof}

We next consider the trace map $\tr: \mathcal{A}(\frakg)\to \R$ and the associated bilinear trace form $\tr(AB)$ for $A, B\in \mathcal{A}(\frakg)$. The {\it kernel} of the trace form is $\{A \in \mathcal{A}(\frakg)\mid \tr(AB) = 0 \mbox{ for any } B\in \mathcal{A}(\frakg) \}$. The trace form is said to be {\it nondegenerate} if the kernel is trivial.    
We have the following result:

\begin{lemma}\label{lem:trace-pairing}
The trace form $\tr:\mathcal{A}(\frakg)\times \mathcal{A}(\frakg)\to\R$  is nondegenerate. 
\end{lemma}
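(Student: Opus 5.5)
The plan is to show that the kernel $I$ of the trace form is a two-sided ideal of $\mathcal{A}(\frakg)$ consisting of nilpotent elements. Since $\mathbb{V}$ is a completely reducible $\mathcal{A}(\frakg)$-module, such an ideal must act by zero on $\mathbb{V}$, and so $I=0$.

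\emph{Step 1: $I$ is a two-sided ideal.} Let $A\in I$ and $C\in\mathcal{A}(\frakg)$. For any $B\in\mathcal{A}(\frakg)$ we have $\tr((CA)B)=\tr(A(BC))=0$ by cyclicity of the trace, and $\tr((AC)B)=\tr(A(CB))=0$. Hence $CA,AC\in I$.

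\emph{Step 2: $I$ is nilpotent.} Let $A\in I$. Since $\mathcal{A}(\frakg)$ is unital and closed under multiplication, $A^{m-1}\in\mathcal{A}(\frakg)$ for every $m\ge 1$. Therefore $\tr(A^m)=\tr(A\cdot A^{m-1})=0$ for all $m\ge1$. Over a field of characteristic zero, the vanishing of all power traces forces every eigenvalue of $A$ (viewed over $\C$) to be zero, so $A$ is nilpotent. Thus $I$ is a finite-dimensional associative algebra consisting of nilpotent elements. By Wedderburn's theorem on nil algebras, $I$ is nilpotent: there is $N$ with $I^N=0$. A more elementary alternative is to note that $\{A\in I\}$ is a set of nilpotent matrices closed under multiplication, so it is simultaneously strictly triangularizable, which again gives $I^N=0$.

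\emph{Step 3: complete reducibility.} Since $\frakg$ is semisimple, Weyl's theorem (applied to $\frakg^\C$ acting on $\mathbb{V}^\C$ and descended to $\R$, or in its real form directly) shows that $\mathbb{V}=\bigoplus_i \mathbb{V}_i$ with each $\mathbb{V}_i$ an irreducible $\frakg$-module. Each $\mathbb{V}_i$ is then also an irreducible $\mathcal{A}(\frakg)$-module. Because $I$ is a left ideal, $I\mathbb{V}_i$ is an $\mathcal{A}(\frakg)$-submodule of $\mathbb{V}_i$, so it is either $0$ or $\mathbb{V}_i$. If $I\mathbb{V}_i=\mathbb{V}_i$, then iterating gives $I^N\mathbb{V}_i=\mathbb{V}_i\neq 0$, which contradicts $I^N=0$. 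Hence $I\mathbb{V}_i=0$ for every $i$, so $I\mathbb{V}=0$. Since the elements of $\mathcal{A}(\frakg)$ are endomorphisms of $\mathbb{V}$, this means $I=0$, i.e., the trace form is nondegenerate.

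The main obstacle is Step 3. It is the only place where semisimplicity of $\frakg$ enters, through Weyl's theorem. Without complete reducibility the statement fails: for example, the algebra of upper-triangular matrices has strictly upper-triangular elements in the kernel of its trace form. Steps 1 and 2 are routine once one notices that $A^{m-1}$ lies in $\mathcal{A}(\frakg)$.
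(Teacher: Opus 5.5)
Your proof is correct, but it takes a different route from the paper. The paper complexifies and writes $\mathbb{V}^\C\approx\bigoplus_\ell m_\ell\mathbb{V}^\C_\ell$ with pairwise nonisomorphic simple $\mathcal{A}(\frakg)^\C$-modules, so that $\tr(A)=\sum_\ell m_\ell\tr(A|_{\mathbb{V}^\C_\ell})$. It then cites Brown--Yakimov (Lemma~3.2(b)) to identify the kernel of the trace form with $\bigcap_\ell\operatorname{Ann}(\mathbb{V}^\C_\ell)=\operatorname{Ann}(\mathbb{V}^\C)=\{0\}$.

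You work directly over $\R$ and prove the needed fact by hand. The kernel $I$ is a two-sided ideal, and every $A\in I$ has $\tr(A^m)=0$ for all $m$ because $A^{m-1}\in\mathcal{A}(\frakg)$, so $A$ is nilpotent. A nil ideal then cannot act nontrivially on an irreducible summand.

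The paper's version is shorter, but it delegates the key step to the citation and uses complete reducibility only implicitly, in writing $\mathbb{V}^\C$ as a direct sum of simples. Yours is self-contained and makes explicit that semisimplicity of $\frakg$ enters only through Weyl's theorem. In fact it shows that, in characteristic zero, the trace form is nondegenerate on any subalgebra of $\operatorname{End}(\mathbb{V})$ that acts completely reducibly.

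You can also drop Wedderburn's (or Levitzki's) theorem entirely. For $0\neq v\in\mathbb{V}_i$, the space $Iv$ is an $\mathcal{A}(\frakg)$-submodule of $\mathbb{V}_i$ because $I$ is a left ideal. If $Iv\neq 0$, then $Iv=\mathbb{V}_i\ni v$, which gives some $A\in I$ with $Av=v$. That contradicts the nilpotency of the single element $A$, which is all you have established in the first half of Step 2.
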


\begin{proof}
We complexify $\mathcal{A}(\frakg)$ and extend the trace form bilinearly to $\mathcal{A}(\frakg)^\C\times \mathcal{A}(\frakg)^\C$. 
Decompose $\mathbb{V}^\C \approx \oplus_{\ell} m_\ell \mathbb{V}^\C_{\ell}$, where the $\mathbb{V}^\C_{\ell}$'s are pairwise nonisomorphic simple $\mathcal{A}(\frakg)^\C$-modules and the $m_\ell$'s are their multiplicities.  
  
It follows that for any $A\in \mathcal{A}(\frakg)^\C$,  
$\tr(A) = \sum_{\ell} m_\ell \tr(A |_{\mathbb{V}^\C_\ell})$. 
Let $\operatorname{Ann}(\mathbb{V}^\C_\ell)$ be the annihilator of $\mathbb{V}^\C_\ell$, i.e.,
$$
\operatorname{Ann}(\mathbb{V}^\C_\ell) := \left\{ A\in \mathcal{A}(\frakg)^{\C} \mid A v=0 \text{ for any } v\in \mathbb{V}^\C_\ell \right\}.
$$
Then, by~\cite[Lemma~3.2(b)]{BrownYakimov}, the kernel of the trace form coincides with $\cap_\ell\operatorname{Ann}(\mathbb{V}^\C_\ell) = \operatorname{Ann}(\mathbb{V}^\C)$. Finally, since 
$\mathcal{A}(\frakg)^\C$ is a subspace of $\operatorname{End}(\mathbb{V}^\C)$, we conclude that $\operatorname{Ann}(\mathbb{V}^\C) = \{0\}$. 
\end{proof}

For any $V\in \mathfrak{X}(G)$ and $\phi\in \mathrm{C}^\infty(G)$, 
let $V\phi\in \mathrm{C}^\infty(G)$ be the directional derivative of $\phi$ along $V$. 
Let $\mathcal{U}(\frakg_R)$ be the universal enveloping algebra of $\frakg_R$, and $\mathcal{U}(\frakg_R) \chi := \{U \chi \mid U \in \mathcal{U}(\frakg_R)\}$. 
It is clear that $\mathcal{U}(\frakg_R)\chi\subseteq \Phi$.     
We show below that $\chi$ is a {\it cyclic vector} of $\Phi$ as a $\mathcal{U}(\frakg_R)$-module.     

\begin{lemma}\label{lem:trace-coefficient-cyclic}
We have that $\mathcal{U}(\mathfrak g_R)\chi = \Phi$. 
\end{lemma}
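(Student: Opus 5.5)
Since $\Phi$ consists of the matrix coefficients $x\mapsto \tr(Ax)$ for $A\in\operatorname{End}(\mathbb{V})$, the plan is to compute $\mathcal{U}(\frakg_R)\chi$ explicitly and compare it with $\Phi$. Every element of $\Phi$ can in fact be written as $x\mapsto\tr(Ax)$ with $A\in\mathcal{A}(\frakg)$: write $\phi(x)=\tr(Bx)$ with $B\in\operatorname{End}(\mathbb{V})$, and let $A$ be the trace-form orthogonal projection of $B$ onto $\mathcal{A}(\frakg)$. Lemma~\ref{lem:trace-pairing} makes the trace form on $\mathcal{A}(\frakg)$ nondegenerate, so for each linear functional $B\mapsto \tr(B\,\cdot)$ restricted to $\mathcal{A}(\frakg)$ there is a unique representing $A\in\mathcal{A}(\frakg)$. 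By Lemma~\ref{lem:representation-algebra}, $x\in G\subseteq\mathcal{A}(G)=\mathcal{A}(\frakg)$, hence $\tr(Bx)=\tr(Ax)$ for all $x\in G$. Thus
$$\Phi=\{\,x\mapsto \tr(Ax)\mid A\in\mathcal{A}(\frakg)\,\}.$$

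Next I compute the action of right-invariant fields. The flow of $R_X$ is $x\mapsto e^{tX}x$, so for $\phi_A(x):=\tr(Ax)$ we get $R_X\phi_A(x)=\frac{d}{dt}\big|_{0}\tr(Ae^{tX}x)=\tr(AXx)=\phi_{AX}(x)$. By induction, for $X_1,\dots,X_m\in\frakg$,
$$R_{X_1}\cdots R_{X_m}\chi=\phi_{X_m\cdots X_1},$$
and $\chi=\phi_I$. Hence $\mathcal{U}(\frakg_R)\chi$ is the set of all $\phi_A$ with $A$ in the unital algebra generated by $\frakg$, which is exactly $\mathcal{A}(\frakg)$. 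The order reversal causes no difficulty, since the span of monomials in elements of $\frakg$ is closed under reversing products.

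Combining the two descriptions gives $\mathcal{U}(\frakg_R)\chi=\{\phi_A\mid A\in\mathcal{A}(\frakg)\}=\Phi$. The only delicate step is the reduction of arbitrary coefficients $B\in\operatorname{End}(\mathbb{V})$ to $A\in\mathcal{A}(\frakg)$. This is where nondegeneracy of the trace form on $\mathcal{A}(\frakg)$ (Lemma~\ref{lem:trace-pairing}) is essential: it ensures that every linear functional on $\mathcal{A}(\frakg)$, such as $A'\mapsto\tr(BA')$, is represented by some element of $\mathcal{A}(\frakg)$ itself. Lemma~\ref{lem:representation-algebra} then guarantees that the group elements lie in $\mathcal{A}(\frakg)$, so the two traces agree on $G$.
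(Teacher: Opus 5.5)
Your proof is correct and uses the same ingredients as the paper: the identity $\mathcal{U}(\frakg_R)\chi=\{\phi_A\mid A\in\mathcal{A}(\frakg)\}$, Lemma~\ref{lem:representation-algebra} to place $G$ inside $\mathcal{A}(\frakg)$, and the nondegeneracy from Lemma~\ref{lem:trace-pairing}. The only difference is the final step. The paper uses nondegeneracy to show that $A\mapsto\phi_A$ is injective and then closes with the dimension bound $\dim\Phi\le\dim\mathcal{A}(G)^*$. You instead use the surjectivity of $\mathcal{A}(\frakg)\to\mathcal{A}(\frakg)^*$, which is equivalent in finite dimensions, to represent each matrix coefficient $\tr(B\,\cdot)$ on $G$ directly by some $A\in\mathcal{A}(\frakg)$; this is a slightly cleaner way to finish.
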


\begin{proof}
For any $n\in \N$ and for any $X_1,\ldots, X_n\in \frakg$, 
$$(R_{X_n}\cdots R_{X_1}\chi)(x) = \tr(x X_1\cdots X_n).$$ 
It follows that
$$
\mathcal{U}(\frakg_R)\chi = \{ \phi_A (x):= \tr(xA) \mid A\in \mathcal{A}(\frakg)\}.
$$
Since $\mathcal{A}(G)$ is spanned by elements of $G$, 
$\phi_A$ vanishes on $G$ if and only if $\phi_A$ vanishes on $\mathcal{A}(G)$.  
By Lemma~\ref{lem:representation-algebra}, $\mathcal{A}(G) = \mathcal{A}(\frakg)$, so $\phi_A = 0$ if and only if $A$ is in the kernel of the trace form. Then, by Lemma~\ref{lem:trace-pairing}, $\phi_A = 0$ if and only if $A = 0$. 
Thus, as vector spaces, 
\begin{equation}\label{eq:uchia}
\mathcal{U}(\frakg_R)\chi \approx \mathcal{A}(\frakg) = \mathcal{A}(G) \approx \mathcal{A}(G)^*.
\end{equation} 
Note that each element $\phi\in \Phi$ can be naturally extended to a function $\hat \phi$ on $\mathcal{A}(G)$, simply given by $\hat \phi(\sum_{i}a_i x_i) := \sum_{i} a_i \phi(x_i)$. Let $\hat\Phi$ be the space spanned by all such elements $\hat \phi$, which is isomorphic to $\Phi$.  
Also, note that 
\begin{equation}\label{eq:hatphiinAG}
\mathcal{U}(\frakg_R)\chi\subseteq \Phi \approx \hat \Phi \subseteq \mathcal{A}(G)^*.
\end{equation} 
For~\eqref{eq:uchia} and~\eqref{eq:hatphiinAG} to hold, we must have that  $\mathcal{U}(\frakg_R)\chi =  \Phi$.    
\end{proof}

With the lemmas above, we now prove Theorem~\ref{thm:degreeone}. 

\begin{proof}[Proof of Theorem~\ref{thm:degreeone}]
Let $S:=\mathcal{U}(\frakg_L \oplus \frakg_R) W_3$ be the subspace of $\mathfrak{X}_{\mathrm{alg}}(G)$ spanned by $W_3$ and all the right-normed Lie products $[V_n, \cdots, [V_1, W_3]]$ for $V_i\in \frakg_L \oplus \frakg_R$.   
By Lemma~\ref{lem:firsttwo}, it suffices to show that $\Phi\frakg_L \subseteq S$. 
Consider the set 
$$\mathfrak{i} := \left\{X\in\frakg\mid \Phi L_X\subseteq S \right\}.$$ 
It is clear from its definition that $\mathfrak{i}$ is a linear subspace of $\frakg$. It is also nonempty because the element $Z$ defined in~\eqref{eq:defineZ} belongs to $\mathfrak{i}$; indeed, since $[\frakg_L, \frakg_R] = 0$ and by Lemma~\ref{lem:trace-coefficient-cyclic}, 
$$\mathcal{U}(\frakg_R)W_3 = (\mathcal{U}(\frakg_R)\chi ) L_Z = \Phi L_Z.$$
Furthermore, for any $X\in\mathfrak{i}$, any $Y\in\mathfrak{g}$, and any $\phi\in\Phi$, 
\begin{equation*}\label{eq:YbracketphiX}
[L_Y, \phi L_X]=(L_Y \phi)L_X + \phi L_{[Y,X]}.
\end{equation*}
Because $S$ is closed under $\frakg_L$,  $[L_Y, \phi L_X]\in S$. 
Also, since $L_Y\phi\in\Phi$ and $X\in\mathfrak{i}$, 
$(L_Y\phi)L_X\in  S$. Since $S$ is a vector space, we obtain that $\phi L_{[Y,X]}\in S$, which holds for any $\phi\in\Phi$. It follows that $[Y,X]\in\mathfrak{i}$ for any $Y\in \frakg$ and any $X\in \mathfrak{i}$, so $\mathfrak{i}$ is an ideal of $\mathfrak g$. 
Every ideal of $\mathfrak g=\oplus_{\ell = 1}^k \mathfrak{g}_\ell$ is a direct sum of a subfamily of the simple ideals $\mathfrak g_\ell$. By construction~\eqref{eq:defineZ}, each component $Z_\ell\in \frakg_\ell$ of $Z$ is nonzero. Since $Z\in\mathfrak{i}$, it holds that $\mathfrak{i} = \mathfrak{g}$. We conclude that $\Phi\mathfrak g_L \subseteq S\subseteq\operatorname{Lie}\{W_1,W_2,W_3\}$. 
\end{proof}

\section{Lie algebra generated by degree-one vector fields}\label{sec:degree-growth}
In this section, $G$ is an arbitrary real, connected, semi-simple Lie group (not necessarily a matrix group) and $\Phi$ is an arbitrary finite-dimensional subspace of $\mathrm{C}^\infty(G)$, closed under left-invariant vector fields.  
For each $n\in \N$, let $\mathcal{A}_n(\Phi)$ be the space spanned by $\phi_1\cdots \phi_n$ with $\phi_i\in \Phi$, and $\mathcal{A}_0(\Phi) := \R\bfo$, where $\bfo$ is the constant function $\bfo(x) = 1$ for all $x\in G$.  
Let $\mathcal{L}_n$ be the space of right-normed Lie products $[V_1,\cdots,[V_{n-1}, V_n]]$ of depth~$n$ with $V_i\in \Phi \frakg_L$. Note that by the Jacobi identity, any Lie product is a linear combination of right-normed Lie products. 

The main result of the section is the following:

\begin{theorem}
\label{thm:liealgebra}
If $\Phi$ does not contain any 
nonzero constant function, then 
$
\mathcal{L}_n=\mathcal{A}_n(\Phi)\mathfrak g_L 
$ for any $n \in \N$. 
\end{theorem}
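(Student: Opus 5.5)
The inclusion $\mathcal{L}_n\subseteq\mathcal{A}_n(\Phi)\frakg_L$ follows by induction on $n$. The reverse inclusion I would also prove by induction, using a bracket identity together with the fact that $\Phi$ has no trivial $\frakg$-component; the step that exploits this is where I expect the real work.

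\textbf{Forward inclusion.} For $\phi\in\Phi$, $f\in\mathrm{C}^\infty(G)$ and $X,Y\in\frakg$ we have the basic identity
\begin{equation*}
[\phi L_X, fL_Y] = \phi\,(L_Xf)\,L_Y - f\,(L_Y\phi)\,L_X + \phi f\,L_{[X,Y]} .
\end{equation*}
Since $\Phi$ is closed under $\frakg_L$, the Leibniz rule shows that each $\mathcal{A}_m(\Phi)$ is closed under $\frakg_L$. Hence if $f\in\mathcal{A}_{n-1}(\Phi)$, all three terms lie in $\mathcal{A}_n(\Phi)\frakg_L$. The case $n=1$ is the definition $\mathcal{L}_1=\Phi\frakg_L=\mathcal{A}_1(\Phi)\frakg_L$, and induction gives $\mathcal{L}_n\subseteq \mathcal{A}_n(\Phi)\frakg_L$.

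\textbf{Two preliminary facts.} First, each $\mathcal{L}_n$ is stable under $\operatorname{ad}_{L_Z}$ for every $Z\in\frakg$. Indeed, $[L_Z,\phi L_X]=(L_Z\phi)L_X+\phi L_{[Z,X]}\in\Phi\frakg_L$, so $\operatorname{ad}_{L_Z}$ preserves $\mathcal{L}_1$, and since it is a derivation it preserves the right-normed products of every depth.

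Second, and this is where the hypothesis enters, $\Phi$ is a finite-dimensional $\frakg$-module via $X\mapsto L_X$. By Weyl's complete reducibility, $\Phi$ decomposes into irreducibles. A trivial summand would consist of functions annihilated by all $L_X$, i.e.\ locally constant and hence constant since $G$ is connected. So the hypothesis says $\Phi$ has no trivial summand, and therefore $\frakg\cdot\Phi=\Phi$: every $\phi\in\Phi$ is a finite sum $\sum_j L_{Y_j}\psi_j$ with $\psi_j\in\Phi$.

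\textbf{Reverse inclusion.} Assume $\mathcal{L}_{n-1}=\mathcal{A}_{n-1}(\Phi)\frakg_L$. Then every $fL_Y$ with $f\in\mathcal{A}_{n-1}(\Phi)$ lies in $\mathcal{L}_{n-1}$, so $[\psi L_X,fL_Y]\in\mathcal{L}_n$ for all $\psi\in\Phi$. Work in the quotient $Q:=\mathcal{A}_n(\Phi)\frakg_L/\mathcal{L}_n$. The basic identity gives, for all $\psi\in\Phi$, $f\in\mathcal{A}_{n-1}(\Phi)$, $X,Y\in\frakg$,
\begin{equation*}
f\,(L_Y\psi)\,L_X \equiv \psi\,(L_Xf)\,L_Y + \psi f\,L_{[X,Y]} \pmod{\mathcal{L}_n}. \tag{$\ast$}
\end{equation*}
By the second fact, the elements $f\,(L_Y\psi)\,L_X$ span $\mathcal{A}_n(\Phi)\frakg_L$. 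So it suffices to show that the right-hand side of $(\ast)$ vanishes in $Q$, i.e.\ that the bilinear expression $B(\psi,f;X,Y):=\psi(L_Xf)L_Y+\psi fL_{[X,Y]}$ lies in $\mathcal{L}_n$.

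\textbf{Plan for the main obstacle.} Showing $B(\psi,f;X,Y)\in\mathcal{L}_n$ is the step I expect to be hardest. The approach I would try first is to play $(\ast)$ against itself in three ways.
\begin{itemize}[label={}, leftmargin=1.5em]
\item \emph{(a) Exchange of roles.} Write $f=\psi'g$ with $\psi'\in\Phi$ and $g\in\mathcal{A}_{n-2}(\Phi)$, and apply $(\ast)$ with $\psi$ and $\psi'$ exchanged.
\item \emph{(b) Invariance of the quotient.} Use that $Q$ is a $\frakg$-module on which $\operatorname{ad}_{L_Z}$ acts by the derivation rule, since $\mathcal{L}_n$ is $\frakg_L$-stable by the first fact.
\item \emph{(c) Perfectness.} Use $[\frakg,\frakg]=\frakg$ to rewrite $L_{[X,Y]}$ terms.
\end{itemize}
Concretely, I would apply $\operatorname{ad}_{L_Z}$ to the relation $(\ast)$ and compare the result with $(\ast)$ instantiated at $L_Z\psi$ and $L_Zf$. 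I expect this to show that $B$ is a $\frakg$-equivariant map into $Q$ which simultaneously factors through $\frakg\cdot\Phi$ and kills a full copy of $\frakg$ in the $X$-slot. Combined with semisimplicity, and with no trivial summand of $\Phi$ to absorb anything, this should force $B\equiv 0$ in $Q$, giving $Q=0$ and hence $\mathcal{L}_n=\mathcal{A}_n(\Phi)\frakg_L$.

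If this bookkeeping does not close, the fallback is to decompose $\frakg^\C$ into root spaces and check $(\ast)$ weight by weight, with $X,Y$ chosen as root vectors $E_{\pm\alpha}$. In that setting $L_{[E_\alpha,E_{-\alpha}]}$ is a Cartan element, and the derivative terms can be isolated by weight considerations.
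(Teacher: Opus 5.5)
Your forward inclusion and the two preliminary facts are correct and match the paper. The reverse inclusion, which is the whole content of the theorem, is not proved.

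The reduction to ``$B(\psi,f;X,Y)\in\mathcal{L}_n$'' is circular. By $(\ast)$ itself, $B(\psi,f;X,Y)\equiv f\,(L_Y\psi)\,L_X$ modulo $\mathcal{L}_n$. By your second fact these elements span $\mathcal{A}_n(\Phi)\frakg_L$. So the classes of $B$ span all of $Q$, and ``$B\equiv 0$ in $Q$'' is literally the statement $Q=0$.

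The proposed equivariance argument does not break this circle. Knowing that $B$ is a $\frakg$-equivariant surjection onto the finite-dimensional module $Q$ says nothing about whether $Q$ vanishes. The claim that $B$ ``kills a full copy of $\frakg$ in the $X$-slot'' is stated as an expectation, not derived, and is again equivalent to the conclusion. What is missing is an explicit mechanism. For each product $\xi=\phi_1\cdots\phi_n$ of weight vectors $\phi_i\in\Phi_{\lambda_i}$, you must exhibit linear combinations of brackets in $[\Phi\frakg_L,\mathcal{A}_{n-1}(\Phi)\frakg_L]$ equal to \emph{nonzero} multiples of $\xi H$ (for $H\in\frakh$) and of $\xi X_\alpha$. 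The nonvanishing of those multiples is a numerical fact, not a formal one.

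The paper supplies exactly this, in the following steps.
\begin{enumerate}
\item With $\lambda=\tfrac12\sum_i\lambda_i$, the single bracket $[\phi_iH,\xi_{-i}H]=2(\lambda-\lambda_i)(H)\,\xi H$ handles $\xi H$ unless all $\lambda_i=\lambda$. That forces $(n-2)\lambda=0$, and the two residual cases carry the difficulty.
\item If $n=2$ and $\lambda\neq0$, decompose $\phi_1$ under the $\mathfrak{sl}_2$-triple $\mathfrak{s}_\alpha$ spanned by $H_\alpha,X_{\pm\alpha}$ as $X_\alpha\psi_1+\phi_1''$ with $\mathfrak{s}_\alpha\phi_1''=0$. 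Then combine brackets such as $[\phi_2H_\alpha,\psi_1X_\alpha]$ and $[\psi_1H_\alpha,\phi_2X_\alpha]$, feeding in the already settled unequal-weight case.
\item If all $\lambda_i=0$, the hypothesis is used in the sharper form $\Phi_0=\sum_{\alpha\in\Delta^+}X_\alpha\Phi_{-\alpha}$. So each $\phi_i$ may be taken to be a zero-weight vector of an $\mathfrak{s}_{\alpha_i}$-irreducible of highest weight $m_i\geq2$. Suitable combinations then yield $\bigl(4-(\alpha_i,\alpha_j)(\alpha_j,\alpha_i)\bigr)\xi H_{\alpha_i}$ when $\alpha_i\neq\alpha_j$, and $\bigl(2-\tfrac14\sum_i m_i(m_i+2)\bigr)\xi H_\alpha$ when all $\alpha_i=\alpha$. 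These coefficients must be nonzero; the second uses $n\geq2$.
\item Only once $\xi H\in\mathcal{L}_n$ is known for all such $\xi$ and all $H\in\frakh$ does $\xi X_\alpha\in\mathcal{L}_n$ follow, via $[\phi_iX_\alpha,\xi_{-i}H]$. A separate bracket $[\psi_1X_\alpha,\xi_{-1}X_{-\alpha}]$ is needed when every $\lambda_i=-\alpha$.
\end{enumerate}

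Your fallback sentence points toward such a root-space computation but supplies none of these steps. As written, the proposal establishes only $\mathcal{L}_n\subseteq\mathcal{A}_n(\Phi)\frakg_L$.
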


It suffices to establish the result for the complexified version, i.e., $\mathcal{L}^\C_n = \mathcal{A}_n(\Phi)^\C \frakg_L^\C$. For ease of notation, we will omit their super-index for the remainder of the section. All spaces below are over~$\C$ unless otherwise stated. 
Also, we will use $X\in \frakg$ as a shorthand notation for the left-invariant vector field $L_X$.

The proof will be carried out by induction on $n$. For the base case $n = 1$, the result holds trivially. For the inductive step, we need to show that
\begin{equation*}
[\Phi\frakg_L, \mathcal{A}_{n-1}(\Phi) \frakg_L] = \mathcal{A}_n(\Phi) \frakg_L.  
\end{equation*}

First, note that for any $\phi\in \Phi$, any $\zeta := \phi_1\cdots \phi_{n-1}\in \mathcal{A}_{n-1}(\Phi)$, and any $X, Y\in \frakg_L$,    
\begin{equation}\label{eq:liebracket}
[\phi X, \zeta Y ] = \phi\zeta [X, Y] + (X\zeta) \phi Y - (Y\phi) \zeta X. 
\end{equation}
Since  $\mathcal{A}_n(\Phi)$ is closed under $\frakg_L$ for all $n \in\N$, the right hand side of~\eqref{eq:liebracket} belongs to $\mathcal{A}_n(\Phi)\frakg_L$, so $[\Phi\frakg_L, \mathcal{A}_{n-1}(\Phi) \frakg_L] \subseteq \mathcal{A}_n(\Phi) \frakg_L$.
The main effort of the proof is in establishing the reverse inclusion:
\begin{equation}\label{eq:converseinclusion}
[\Phi\frakg_L, \mathcal{A}_{n-1}(\Phi) \frakg_L] \supseteq \mathcal{A}_n(\Phi) \frakg_L.
\end{equation}

We recall that $\mathfrak{h}$ is a Cartan subalgebra of $\frakg$, $\Delta$ is the set of roots associated with $(\frakg, \frakh)$, and that $\frakg = \frakh \oplus \bigoplus_{\alpha \in \Delta} \frakg_\alpha$ is the root space decomposition. 
Let $B(X, Y):= \operatorname{tr}(\operatorname{ad}_X\operatorname{ad}_Y)$ be the Killing form. 
For each $\alpha\in \Delta$, let $h_\alpha\in \frakh$ be defined such that $\alpha(H) = B(h_\alpha, H)$ for any $H\in \frakh$.  
Let $\langle \alpha, \beta \rangle:= B(h_\alpha, h_\beta)$, which is an inner-product on the $\R$-span of $\Delta$. 
We extend the inner-product to~$\frakh^*$. 
For each nonzero $\alpha\in \frakh^*$, we follow the notation of~\cite{Knapp} and write  $$(\beta,\alpha):= 2\langle \beta, \alpha\rangle/\|\alpha\|^2,$$
for all $\beta\in \frakh^*$.  
Further, let $H_\alpha \in \frakh$ be defined such that $\beta(H_{\alpha}) = (\beta, \alpha)$.  
In particular, $\alpha(H_\alpha) = 2$.

As a consequence of the Chevalley's basis, there exist elements $X_\alpha\in \frakg_\alpha$, for $\alpha\in \Delta$, such that the following commutation relations hold: 
$$
\begin{aligned}
\relax 
[X_\alpha, X_{-\alpha}] & = H_\alpha, \quad \mbox{for any } \alpha \in \Delta, \\
[H_\alpha, X_\beta] & = (\beta,\alpha) X_\beta,  \quad \mbox{for any } \alpha, \beta\in \Delta,  \\
[X_\alpha, X_\beta] & = c_{\alpha, \beta} X_{\alpha + \beta}, \quad \mbox{for nonproportional } \alpha \mbox{ and } \beta, 
\end{aligned}
$$
where $c_{\alpha,\beta}^2 = (r + 1)^2$, with $r$ the largest integer such that $(\beta - r\alpha)$ is a root (by convention, $X_{\alpha + \beta} = 0$ if $\alpha + \beta \notin\Delta \cup \{0\}$).

Next, let $\Lambda$ be the set of weights associated with the representation $\Phi$ of $\frakg$, and $
\Phi = \oplus_{\lambda \in\Lambda}\Phi_\lambda$ 
be the weight space decomposition. By the highest weight theorem, each $\lambda\in \Lambda$ belongs to the $\R$-span of $\Delta$, so $(\lambda, \alpha)\in \R$ for all $\alpha\in \Delta$.  
It is clear that $\mathcal{A}_n(\Phi)\frakg_L$ is spanned by elements $\xi H_{\alpha}, \xi X_{\alpha}$, where 
 $\alpha\in \Delta$ and $$\xi = \prod_{i = 1}^n\phi_i, \quad \mbox{ with } \phi_i\in \Phi_{\lambda_i} \mbox{ for some } \lambda_i\in \Lambda.$$    
For any such $\xi$, we show below that $\xi H_\alpha, \xi X_\alpha\in [\Phi\frakg_L, \mathcal{A}_{n-1}(\Phi)\frakg_L]$. For convenience, for each $i = 1,\ldots, n$, let 
$\xi_{-i}:= \prod_{j = 1, j\neq i}^n \phi_j$. Also, let 
\begin{equation}\label{eq:defsigmaanddelta}
\lambda:= \frac{1}{2}\sum_{i = 1}^{n} \lambda_i.
\end{equation} 

We start with the following proposition: 

\begin{proposition}\label{prop:case1}
    If there exists some $i$ such that $\lambda_i\neq \lambda$, then $\xi H\in \mathcal{L}_n $ for any $H\in \frakh$.
\end{proposition}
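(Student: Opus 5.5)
The plan is to realize $\xi H$ directly as a single bracket of the form $[\phi_i H', \xi_{-i}H'']$ with $H',H''\in\frakh$, for an index $i$ with $\lambda_i\neq\lambda$. Such a bracket lies in $\mathcal{L}_n$ for the following reason. By the inductive hypothesis $\mathcal{L}_{n-1}=\mathcal{A}_{n-1}(\Phi)\frakg_L$, so $\xi_{-i}H''\in\mathcal{L}_{n-1}$. Also $\phi_i H'\in\Phi\frakg_L$, and $\mathcal{L}_n=[\Phi\frakg_L,\mathcal{L}_{n-1}]$ by the definition of right-normed products. Hence every such bracket lies in $\mathcal{L}_n$, and it remains to show that these brackets span $\xi\frakh$.

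The key computation uses~\eqref{eq:liebracket} with $X=H'$, $Y=H''$ both in $\frakh$. Since $[H',H'']=0$ and $\phi_j\in\Phi_{\lambda_j}$, we have $H'\xi_{-i}=\mu_i(H')\xi_{-i}$, where $\mu_i:=\sum_{j\neq i}\lambda_j=2\lambda-\lambda_i$, and $H''\phi_i=\lambda_i(H'')\phi_i$. This gives
\begin{equation*}
[\phi_i H',\xi_{-i}H''] = \xi\bigl(a(H')H''-b(H'')H'\bigr),\qquad a:=2\lambda-\lambda_i,\quad b:=\lambda_i .
\end{equation*}
So the problem reduces to linear algebra on $\frakh$. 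We must show that the vectors $a(H')H''-b(H'')H'$, for $H',H''\in\frakh$, span $\frakh$, using only $a-b=2(\lambda-\lambda_i)\neq0$.

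First take $H'=H''=H_0$ with $(a-b)(H_0)\neq0$. This yields $(a-b)(H_0)\,\xi H_0$, hence $\xi H_0\in\mathcal{L}_n$. Since $\ker(a-b)$ is a hyperplane not containing $H_0$, it suffices to also produce $\xi K$ for every $K\in\ker(a-b)$. We split into two cases.

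If $a\neq0$, choose $H_0$ generic so that both $(a-b)(H_0)\neq0$ and $a(H_0)\neq0$. Taking $H'=H_0$ and $H''=K$ gives $a(H_0)\xi K-b(K)\xi H_0$. Because $\xi H_0$ is already in $\mathcal{L}_n$, we obtain $\xi K\in\mathcal{L}_n$.

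If $a=0$, then $b=\lambda_i\neq0$ (indeed $b=-(a-b)\neq0$), and we choose $H_0$ generic with $b(H_0)\neq0$. Taking $H'=K$ and $H''=H_0$ gives $-b(H_0)\xi K$, so again $\xi K\in\mathcal{L}_n$.

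Combining the two pieces, $\xi\frakh=\xi(\R H_0\oplus\ker(a-b))\subseteq\mathcal{L}_n$. The existence of the generic $H_0$ is immediate, since we only need to avoid finitely many proper hyperplanes of $\frakh$.

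The only real obstacle is making sure that the coefficient produced by~\eqref{eq:liebracket} is exactly the functional $a-b$, whose nonvanishing is the hypothesis $\lambda_i\neq\lambda$, and that the degenerate situations ($a=0$ or $b=0$) are covered by the case split. Everything else is bookkeeping with the weight decomposition and the inductive hypothesis.
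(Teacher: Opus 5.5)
Your argument is correct and is essentially the paper's: the diagonal bracket $[\phi_i H,\xi_{-i}H]=\delta_i(H)\,\xi H$, with $\delta_i=2(\lambda-\lambda_i)=a-b$, is exactly the computation used there. The only difference is how you cover $\ker\delta_i$, and there your mixed brackets and case split on $a$ are unnecessary: $\xi K=\xi(K+H_0)-\xi H_0$ with $\delta_i(K+H_0)=\delta_i(H_0)\neq 0$ (the paper takes $H_0=H_{\delta_i}$), and since everything is complexified you should write $\C H_0$ rather than $\R H_0$.
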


\begin{proof}
Let $\delta_i:= 2(\lambda - \lambda_i) \neq 0$. 
It directly follows from computation that
$$
[\phi_i H, \xi_{-i} H]  = (H \xi_{-i}) \phi_i H - (H\phi_i) \xi_{-i} H 
= \delta_i(H) \xi H.
$$
If $\delta_i(H)\neq 0$, then $\xi H\in \mathcal{L}_n$.  
Otherwise, let $H':= H + H_{\delta_i}$. Since $\delta_i(H') = \delta_i(H_{\delta_i}) = 2$, both $\xi H'$ and $\xi H_{\delta_i}$ belong to $\mathcal{L}_n$ and hence,   
$\xi H = \xi H' -  \xi H_{\delta_i} \in \mathcal{L}_n$.
\end{proof}

We now assume that $\lambda_i = \lambda$ for all $i = 1,\ldots, n$ and show that $\xi H\in \mathcal{L}_n$. It follows from~\eqref{eq:defsigmaanddelta} that $(n - 2)\lambda = 0$. 
Consider the following two cases:
\begin{description}
    \item[\it Case 1:] $n = 2$ and $\lambda \neq 0$;
    \item[\it Case 2:] $\lambda = 0$.
\end{description}

We first deal with Case~1. For any given $\alpha\in \Delta$, let $\mathfrak{s}_\alpha$ be the Lie sub-algebra of $\frakg$ spanned by $\{H_\alpha, X_\alpha, X_{-\alpha}\}$, which is isomorphic to $\mathfrak{sl}_2(\C)$. We need the following lemma: 

\begin{lemma}\label{lem:sl2rep}
For any $\phi \in \Phi_\lambda$ and for any $\alpha\in \Delta$ with $(\lambda, \alpha) \geq 0$, there exists a decomposition  $\phi = \phi' + \phi''$, with $\phi',\phi''\in \Phi_\lambda$, such that the following hold
\begin{enumerate}
\item There exists an element $\psi\in \Phi_{\lambda - \alpha}$ such that $X_\alpha  \psi = \phi'$. 
\item  $\mathfrak{s}_\alpha \phi'' = 0$.
\end{enumerate}
\end{lemma}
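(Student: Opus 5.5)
The plan is to use the representation theory of $\mathfrak{s}_\alpha\cong\mathfrak{sl}_2(\C)$ on the finite-dimensional space $\Phi$, which is a $\frakg$-module since it is closed under left-invariant vector fields. Restricting to $\mathfrak{s}_\alpha$, the complete reducibility theorem (Weyl) lets me write $\Phi=\bigoplus_j U_j$, where each $U_j$ is an irreducible $\mathfrak{s}_\alpha$-submodule. Because $H_\alpha\in\frakh$ acts semisimply and commutes with $\frakh$, I can choose the $U_j$ to be $\frakh$-stable. To arrange this, I decompose the $\alpha$-string module $\bigoplus_{k\in\mathbb{Z}}\Phi_{\lambda+k\alpha}$, which is $\frakh$- and $\mathfrak{s}_\alpha$-stable, into isotypic components and then choose bases compatibly with the weight spaces. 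Then $\Phi_\lambda=\bigoplus_j (U_j\cap\Phi_\lambda)$, and each $U_j\cap\Phi_\lambda$ is either zero or one-dimensional, spanned by a vector of $H_\alpha$-eigenvalue $(\lambda,\alpha)$.

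I will split $\phi$ according to this decomposition, $\phi=\sum_j\phi_j$ with $\phi_j\in U_j\cap\Phi_\lambda$. I set $\phi''$ to be the sum of the components lying in trivial (one-dimensional) $U_j$, and $\phi'$ to be the sum of the rest. Condition~2 then holds immediately, and both pieces lie in $\Phi_\lambda$.

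For condition~1, consider a nontrivial irreducible $U_j$ of highest weight $m\ge 1$. Its $H_\alpha$-eigenvalues are $m,m-2,\dots,-m$, and $X_\alpha$ maps the eigenspace for $k$ isomorphically onto the eigenspace for $k+2$ whenever $k<m$. The component $\phi_j$ has eigenvalue $(\lambda,\alpha)\ge 0$. This eigenvalue is not $-m$, but it might be the top eigenvalue $m$.

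This is the main obstacle. If $(\lambda,\alpha)=m$, then $\phi_j$ is a highest weight vector and is not obviously in the image of $X_\alpha$. It is, however, the image of $X_\alpha$ applied to a vector of eigenvalue $m-2\ge -m$, and that eigenvalue exists because $m\ge1$. In general, for eigenvalue $k\in\{m,m-2,\dots,-m+2\}$ and $k\ge 0>-m$, the lower eigenvector of eigenvalue $k-2\ge -m$ exists. Since $X_\alpha X_{-\alpha}$ acts on the eigenvalue-$k$ space by a scalar that is nonzero for $k>-m$, I can take $\psi_j:=c_j^{-1}X_{-\alpha}\phi_j$ for the appropriate nonzero constant $c_j$.

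Finally, $X_{-\alpha}$ maps $\Phi_\lambda$ into $\Phi_{\lambda-\alpha}$, so $\psi:=\sum_j\psi_j$ lies in $\Phi_{\lambda-\alpha}$ and satisfies $X_\alpha\psi=\phi'$. The hypothesis $(\lambda,\alpha)\ge0$ is exactly what guarantees $k>-m$ in each nontrivial component, and hence that the scalar $c_j$ is nonzero.
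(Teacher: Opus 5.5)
Your proposal is correct and follows essentially the same route as the paper. Both arguments decompose into irreducible $\mathfrak{s}_\alpha$-modules, put the trivial components into $\phi''$, and invert $X_\alpha X_{-\alpha}$ (which acts by $\frac14(m-k+2)(m+k)\neq 0$ for $m\geq 1$, $k\geq 0$) to obtain $\psi$ from $X_{-\alpha}\phi_j$. The only difference is that the paper decomposes the cyclic $\mathfrak{s}_\alpha$-submodule generated by $\phi$ rather than the $\alpha$-string module; on either one, compatibility with the $\frakh$-weight spaces is automatic, because each $H\in\ker\alpha$ acts by the scalar $\lambda(H)$, so your special choice of bases is not needed.
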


\begin{proof}
Let $\Phi'$ be the smallest subspace of $\Phi$ that contains $\phi$ and is closed under $\mathfrak{s}_\alpha$, so $\Phi'$ is a representation of $\mathfrak{s}_\alpha$. 
Decompose $\Phi'= \oplus_{\ell}\Phi'_{\ell}$ such that each $ \Phi'_{\ell}$ is irreducible.  
Correspondingly, we write $\phi = \sum_{\ell} \phi_{\ell}$ where  $\phi_{\ell}\in \Phi'_{\ell}$. 
Note that each $\phi_\ell$  necessarily belongs to $\Phi_\lambda$. 
Let  $m_\ell\in \N_0$ be the highest weight of~$\Phi'_{\ell}$, and $k:=(\lambda, \alpha) \geq 0$. 
Then, $m_\ell \geq k$ and, moreover, $m_\ell$ and $k$ share the same parity. 
Let 
$\phi':= \sum_{\ell: m_\ell > 0} \phi_\ell$ and 
$\phi'':= \sum_{\ell: m_\ell = 0} \phi_\ell$. 
By construction, $\mathfrak{s}_\alpha \phi'' = 0$.  
For $\phi'$, note that $${X_\alpha}{X_{-\alpha}}\phi_{\ell} = \frac{1}{4}( m_\ell - k + 2)(m_\ell + k)\phi_{\ell}.$$  
Thus, by setting $$\psi:= \sum_{\ell:m_\ell > 0} \frac{4}{(m_\ell - k + 2)(m_\ell + k)} {X_{-\alpha}} \phi_{\ell},$$
we obtain ${X_\alpha}\psi = \phi'$.   
\end{proof}

With the lemma above, we now establish the following proposition: 

\begin{proposition}\label{prop:case2}
    Let $\phi_1, \phi_2\in \Phi_\lambda$, with $\lambda\neq 0$. 
    Then, $\phi_1\phi_2 H_\alpha \in \mathcal{L}_2$ for any $\alpha\in \Delta$.
\end{proposition}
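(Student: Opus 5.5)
The plan is to produce $\phi_1\phi_2 H$ in two steps: first for $H$ in the hyperplane $\ker\lambda\subseteq\frakh$, then for one further element of $\frakh$ outside $\ker\lambda$. Together these give $\phi_1\phi_2\frakh\subseteq\mathcal{L}_2$, and in particular $\phi_1\phi_2H_\alpha\in\mathcal{L}_2$ for every $\alpha\in\Delta$. All computations use~\eqref{eq:liebracket} with $n=2$.

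\textbf{Step 1 (the hyperplane $\ker\lambda$).} For $H,H'\in\frakh$ we have $[H,H']=0$, so~\eqref{eq:liebracket} gives
$$[\phi_1 H,\phi_2 H'] = \lambda(H)\,\phi_1\phi_2 H' - \lambda(H')\,\phi_1\phi_2 H .$$
Since $\lambda\neq 0$, fix $H$ with $\lambda(H)\neq0$. Then for every $H'\in\ker\lambda$ the bracket equals $\lambda(H)\,\phi_1\phi_2H'$, so $\phi_1\phi_2H'\in\mathcal{L}_2$.

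\textbf{Step 2 (one direction transverse to $\ker\lambda$).} Because $\lambda\neq0$ lies in the $\R$-span of $\Delta$, there is a root $\beta$ with $k:=(\lambda,\beta)\neq0$; replacing $\beta$ by $-\beta$ we may take $k>0$.

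\emph{Applying Lemma~\ref{lem:sl2rep}.} Apply the lemma to $\phi_1$ and $\beta$. The component $\phi''$ is annihilated by $\mathfrak{s}_\beta$, yet $H_\beta$ acts on $\Phi_\lambda$ by $k\neq0$, so $\phi''=0$. Hence $\phi_1=X_\beta\psi$ for some $\psi\in\Phi_{\lambda-\beta}$.

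\emph{The key bracket.} For $H\in\frakh$,
$$[\phi_2X_\beta,\ \psi H] = \phi_2\psi[X_\beta,H] + (X_\beta\psi)\phi_2H - (H\phi_2)\psi X_\beta = \phi_1\phi_2 H - (\lambda+\beta)(H)\,\psi\phi_2X_\beta .$$
Therefore $\phi_1\phi_2H\in\mathcal{L}_2$ for every $H\in\ker(\lambda+\beta)$.

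\emph{Generic case.} Suppose $\beta$ is not proportional to $\lambda$. Then $\ker(\lambda+\beta)$ is either all of $\frakh$ (if $\lambda+\beta=0$, which cannot occur here) or a hyperplane different from $\ker\lambda$. In either case $\ker\lambda+\ker(\lambda+\beta)=\frakh$, and Steps 1 and 2 together finish the proof.

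\textbf{Main obstacle: the degenerate case.} Such a $\beta$ fails to exist exactly when every root not orthogonal to $\lambda$ is proportional to $\lambda$. This forces $\lambda=c\beta$ for a root $\beta$ spanning an $\mathfrak{sl}_2$ simple ideal. In that case Step 1 already covers the Cartan directions of all other simple ideals, and only $\phi_1\phi_2H_\beta$ is missing.

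To handle it, I would pair the Step 2 identity (at $H=H_\beta$, where it reads $\phi_1\phi_2H_\beta-(2+k)\psi\phi_2X_\beta$) with a second relation that isolates $\psi\phi_2X_\beta$ or $\phi_1\phi_2H_\beta$. The first candidate is
$$[\psi X_\beta,\phi_2X_{-\beta}] = \psi\phi_2H_\beta + (X_\beta\phi_2)\psi X_{-\beta} - (X_{-\beta}\psi)\phi_2X_\beta .$$
I would combine it with analogous brackets in which the roles of $\phi_1,\phi_2$ are swapped, using that $\phi_2$ also equals $X_\beta\psi_2$ by the same lemma. Inside the single $\mathfrak{sl}_2$-module generated by $\phi_1,\phi_2$, this gives a finite linear system. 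I expect the bulk of the work to be showing that this system is nondegenerate, using $k>0$ and the eigenvalue formula for $X_\beta X_{-\beta}$ from the proof of Lemma~\ref{lem:sl2rep}.
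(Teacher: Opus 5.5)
Your Steps 1 and 2 are correct, but the proof has a genuine gap in what you call the degenerate case, and that case is not vacuous. There every root not orthogonal to $\lambda$ is proportional to it, so $\ker(\lambda+\beta)=\ker\lambda$ and Step 2 adds nothing. For $G=\mathrm{SL}_2(\R)$ both kernels are $0$, so neither step produces anything.

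The repair you sketch would also fail as stated. The bracket $[\psi X_\beta,\phi_2X_{-\beta}]$, and its variants with $\phi_1,\phi_2$ swapped, has $\operatorname{ad}\frakh$-weight $\lambda+(\lambda-\beta)=2\lambda-\beta$. Your targets $\phi_1\phi_2H_\beta$ and $\psi\phi_2X_\beta$ have weight $2\lambda$. Brackets are $\frakh$-equivariant and distinct weight components are linearly independent. So relations in weight $2\lambda-\beta$ say nothing about your targets, and no nondegeneracy argument can rescue that system.

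The missing idea is the paper's Sub-case 1.1: use the other weight-$2\lambda$ bracket,
\[
[\phi_2H_\beta,\psi X_\beta]=k\,\psi\phi_2X_\beta-\psi\,(X_\beta\phi_2)\,H_\beta .
\]
Here $\psi\in\Phi_{\lambda-\beta}$ and $X_\beta\phi_2\in\Phi_{\lambda+\beta}$ have different weights. So $\psi(X_\beta\phi_2)H_\beta\in\mathcal{L}_2$ by Proposition~\ref{prop:case1}; directly, $[\psi H_\beta,(X_\beta\phi_2)H_\beta]=4\,\psi(X_\beta\phi_2)H_\beta$. Since $k\neq 0$, this gives $\psi\phi_2X_\beta\in\mathcal{L}_2$. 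Your Step 2 identity at $H=H_\beta$ reads $[\phi_2X_\beta,\psi H_\beta]=\phi_1\phi_2H_\beta-(k+2)\psi\phi_2X_\beta$, and it then yields $\phi_1\phi_2H_\beta\in\mathcal{L}_2$.

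This argument works for any root $\beta$ with $(\lambda,\beta)\neq 0$, and then $H_\beta\notin\ker\lambda$. Combined with your Step 1, it gives $\phi_1\phi_2\frakh\subseteq\mathcal{L}_2$ with no generic/degenerate split. Your Step 1 is also worth keeping on its own merits. It disposes of the paper's Sub-case 1.2, where $(\lambda,\alpha)=0$ and hence $H_\alpha\in\ker\lambda$, in one line, with no need for the $\phi'+\phi''$ decomposition from Lemma~\ref{lem:sl2rep}.
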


\begin{proof}
We consider the following two sub-cases: 

\xc{Sub-case 1.1: $(\lambda, \alpha) \neq 0$.} Without loss of generality, we assume that $(\lambda, \alpha) > 0$. Then, $\mu:=\lambda - \alpha$ is a weight. 
We appeal to Lemma~\ref{lem:sl2rep} to obtain the decomposition $\phi_1 = \phi_1' + \phi_1''$. Note that $H_\alpha \phi_1'' = (\lambda, \alpha) \phi_1''$. 
Since $(\lambda, \alpha)\neq 0$, we must have that $\phi_1'' = 0$ and hence, $\phi_1' = \phi_1$.  
Let $\psi_1\in \Phi_\mu$ be such that $X_\alpha \psi_1 = \phi_1$. Then, by~\eqref{eq:liebracket}, 
\begin{equation}\label{eq:case2-1}
\begin{aligned}\relax
[\phi_2 H_\alpha, \psi_1  X_\alpha] & = \psi_1\phi_2[H_\alpha, X_\alpha] + (\mu,\alpha)\psi_1 \phi_2 X_\alpha - \psi_1 (X_\alpha \phi_2) H_\alpha \\
& = (\lambda, \alpha) \psi_1 \phi_2 X_\alpha - \psi_1 (X_\alpha \phi_2) H_\alpha \in \mathcal{L}_2.
\end{aligned}
\end{equation}
Note that $X_\alpha \phi_2\in \Phi_\nu$, where $\nu:= \lambda + \alpha$. 
Since $\nu - \mu = 2\alpha \neq 0$, it follows from Proposition~\ref{prop:case1} that $\psi_1 \psi_2 H_\alpha\in \mathcal{L}_2$ for any $\psi_2\in \Phi_\nu$. Using~\eqref{eq:case2-1} and the fact that $(\lambda, \alpha)\neq 0$, we have that $\psi_1 \phi_2 X_\alpha \in \mathcal{L}_2$. Using~\eqref{eq:liebracket} again, we obtain that  
\begin{equation*}\label{eq:case2-2}
\begin{aligned}\relax
[\psi_1 H_\alpha, \phi_2  X_\alpha] & = \psi_1\phi_2[H_\alpha, X_\alpha] + (\lambda, \alpha) \psi_1\phi_2 X_\alpha - (X_\alpha \psi_1) \phi_2 H_\alpha \\
& = (\nu, \alpha) \psi_1\phi_2 X_\alpha - \phi_1\phi_2 H_\alpha \in \mathcal{L}_2.
\end{aligned}
\end{equation*}
Since $\psi_1\phi_2 X_\alpha\in \mathcal{L}_2$, we have that $\phi_1\phi_2 H_\alpha\in \mathcal{L}_2$.

\xc{Sub-case 1.2: $(\lambda, \alpha) = 0$.} 
We use Lemma~\ref{lem:sl2rep} to decompose $\phi_i = \phi'_i + \phi''_i$, for $i= 1,2$. In case $\phi'_i$ is nonzero, we let $\psi_i\in \Phi_{\lambda- \alpha}$ be such that $X_\alpha\psi_i = \phi'_i$. 
We then write 
\begin{equation}\label{eq:case2-3}
\phi_1 \phi_2 H_\alpha = \phi'_1\phi'_2 H_\alpha + \phi'_1\phi''_2 H_\alpha + \phi''_1 \phi'_2 H_\alpha + \phi''_1\phi''_2 H_\alpha.
\end{equation}
We show below that each term on the right hand side of~\eqref{eq:case2-3} belongs to $\mathcal{L}_2$. 
For the first term $\phi'_1 \phi'_2 H_\alpha$, we let $H':= H_\alpha + H_\lambda$ and note that $\lambda(H')- \alpha(H') = 0$.  
Then, 
$$
\begin{aligned}\relax
[\phi'_2 H', \psi_1 X_\alpha ] & = \psi_1 \phi'_2 \alpha(H') X_\alpha + (\lambda - \alpha)(H') \phi'_2\psi_1 X_\alpha - \psi_1 (X_\alpha \phi'_2) H' \\
& = 2 \psi_1 \phi'_2 X_\alpha - \psi_1 (X_\alpha \phi'_2) H' \in \mathcal{L}_2, \\
[\psi_1 H_\alpha, \phi'_2 X_\alpha] & = 2\psi_1 \phi'_2 X_\alpha - \phi'_1\phi'_2 H_\alpha \in \mathcal{L}_2. 
\end{aligned}
$$
Using the same arguments as in Sub-case~1.1, $\psi_1 (X_\alpha \phi'_2) H'\in \mathcal{L}_2$ which implies that $\psi_1\phi'_2 X_\alpha\in \mathcal{L}_2$ and hence, $\phi'_1\phi'_2 H_\alpha\in \mathcal{L}_2$. 
For the second term $\phi'_1\phi''_2 H_\alpha$ of~\eqref{eq:case2-3}, we similarly have that
$$
\begin{aligned}\relax
[\phi''_2 H', \psi_1 X_\alpha ] & = 2\psi_1 \phi''_2 X_\alpha \in \mathcal{L}_2,  \\
[\psi_1 H_\alpha, \phi''_2 X_\alpha] & = 2\psi_1 \phi''_2 X_\alpha - \phi'_1\phi''_2 H_\alpha \in \mathcal{L}_2, 
\end{aligned}
$$
so $\phi'_1\phi''_2 H_\alpha\in \mathcal{L}_2$. By the same arguments (and by symmetry), the third term $\phi''_1\phi'_2 H_\alpha$ also belongs to $\mathcal{L}_2$.
Finally, for the last term $\phi''_1\phi''_2 H_\alpha$, we have that
$$
[\phi''_1 X_\alpha, \phi''_2 X_{-\alpha}] = \phi''_1\phi''_2 H_\alpha \in \mathcal{L}_2.
$$
This completes the proof.
\end{proof}

We now deal with Case 2, i.e., $\lambda_i = 0$ for all $i = 1,\ldots, n$. 
Choose a positive system and let $\Delta^+$ be the set of positive roots. 
By the hypothesis of Theorem~\ref{thm:liealgebra}, $\Phi$ does not contain any nonzero constant function, so $\Phi$ does not contain a trivial representation of $\frakg$.  
It follows from the highest weight theorem that the weight space $\Phi_0$ can be expressed as 
$\Phi_0 = \sum_{\alpha\in \Delta^+} X_{\alpha}\Phi_{-\alpha}$. 
By multi-linearity, we can assume, without loss of generality, 
that $\phi_1,\ldots, \phi_n$ satisfy the condition that there exist $\alpha_i\in \Delta^+$ and $\psi_i \in \Phi_{-\alpha_i}$, for $i = 1,\ldots, n$, such that
\begin{equation}\label{eq:xalphapsi}
X_{\alpha_i}\psi_i  = \phi_i. 
\end{equation}
Let $\Phi_i$ be the smallest subspace of $\Phi$ that contains $\psi_i$ and is closed under $\mathfrak{s}_{\alpha_i}$. 
Decompose $\Phi_i = \oplus_\ell\Phi_{i,\ell}$, where each $\Phi_{i,\ell}$ is irreducible under $\mathfrak{s}_{\alpha_i}$. 
Let $m_{i,\ell}$ be the highest weight of $\Phi_{i,\ell}$. 
Since $0$ is a weight of $\Phi_{i,\ell}$, $m_{i,\ell}$ is an even integer. Moreover, it follows from~\eqref{eq:xalphapsi} that $m_{i,\ell}$ must be strictly positive because otherwise no such $\psi_i$ would exist for the relation to hold.  
For ease of presentation but without loss of generality, we can assume that the representation $\Phi_i$  is itself irreducible. 

We now establish the following result: 

\begin{proposition}\label{prop:case3}
    Suppose that  each $\phi_i$ is a zero-weight vector of the irreducible representation~$\Phi_i$ of $\mathfrak{s}_{\alpha_i}$ with highest weight $m_i > 0$; then, $\xi H\in \mathcal{L}_n$ for all $H\in \mathfrak{h}$.     
\end{proposition}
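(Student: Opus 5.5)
The plan is to first get $\xi H$ for $H$ in the hyperplane $\ker\alpha_1$, and then to recover the one remaining direction $H_{\alpha_1}$ from a small linear system built with the $\mathfrak{s}_{\alpha_1}$-structure of $\Phi_1$. Write $\alpha:=\alpha_1$ and $\zeta:=\xi_{-1}$. Then $\zeta$ has weight $0$, so $H\zeta=0$ for all $H\in\frakh$, and $\xi=\phi_1\zeta$ with $\phi_1=X_\alpha\psi_1$.

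\textbf{Step 1: the hyperplane $\ker\alpha$.} Apply \eqref{eq:liebracket} to the bracket
$$[\psi_1 H,\zeta X_\alpha]=\alpha(H)\,\psi_1\zeta X_\alpha-\xi H\in\mathcal{L}_n .$$
For $H\in\ker\alpha$ this yields $\xi H\in\mathcal{L}_n$ directly. Since $\frakh=\ker\alpha\oplus\C H_\alpha$, it remains to show $\xi H_\alpha\in\mathcal{L}_n$. By the same identity with $H=H_\alpha$, this is equivalent to showing $\psi_1\zeta X_\alpha\in\mathcal{L}_n$.

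\textbf{Step 2: the direction $H_\alpha$.} Here I use that $\Phi_1$ is irreducible with highest weight $m_1>0$. This gives $X_{-\alpha}\phi_1=X_{-\alpha}X_\alpha\psi_1=c\,\psi_1$ with $c=\tfrac14 m_1(m_1+2)\neq0$ (the formula from the proof of Lemma~\ref{lem:sl2rep} with $k=-2$). I would then compute two further brackets via \eqref{eq:liebracket}:
$$[\phi_1 X_\alpha,\zeta X_{-\alpha}]=\xi H_\alpha+(X_\alpha\zeta)\phi_1X_{-\alpha}-c\,\psi_1\zeta X_\alpha ,\qquad [\phi_1 H,\zeta X_{-\alpha}]=-\alpha(H)\,\xi X_{-\alpha}-c\,\psi_1\zeta H .$$
The term $\psi_1\zeta H$ has factor weights $-\alpha,0,\dots,0$, whose half-sum $-\alpha/2$ differs from $-\alpha$. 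So Proposition~\ref{prop:case1} puts $\psi_1\zeta H$ in $\mathcal{L}_n$ for every $H$, and the second bracket with $\alpha(H)\neq0$ gives $\xi X_{-\alpha}\in\mathcal{L}_n$. Proposition~\ref{prop:case1} also controls $H$-components of any product whose factor weights are not all equal, for example $(X_\alpha\phi_j)\psi_1\cdots$, which has weights $\alpha$ and $-\alpha$ among its factors. Combining these with the Step~1 identity, I aim to eliminate $\psi_1\zeta X_\alpha$ and isolate $\xi H_\alpha$.

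\textbf{Main obstacle.} The difficulty is the cross term $(X_\alpha\zeta)\phi_1X_{-\alpha}$ and its relatives. These are products with factor weights $\alpha,0,\dots,0$ multiplying a root vector rather than an element of $\frakh$, so Proposition~\ref{prop:case1} does not cover them. My first attempt would be to prove a root-vector analogue of Proposition~\ref{prop:case1}: if the factor weights of $\eta$ are not all equal, then $\eta X_\beta\in\mathcal{L}_n$. The route is $[\eta' H,\eta''X_\beta]$, where $\eta=\eta'\eta''$ is a splitting into two groups with distinct total weights; varying $H$ separates the $\eta X_\beta$ term from $H$-terms, which Proposition~\ref{prop:case1} already handles. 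If that works, all auxiliary terms in Step~2 lie in $\mathcal{L}_n$, and the first displayed bracket reduces to $\xi H_\alpha\equiv c\,\psi_1\zeta X_\alpha$ modulo $\mathcal{L}_n$. Step~1 gives $\xi H_\alpha\equiv 2\,\psi_1\zeta X_\alpha$. Since $c\neq 2$ in general, subtracting gives $\psi_1\zeta X_\alpha\in\mathcal{L}_n$, hence $\xi H_\alpha\in\mathcal{L}_n$. If $c=2$ (that is, $m_1=2$), I would instead use a third bracket such as $[\psi_1X_\alpha,\zeta H]$ to break the degeneracy.
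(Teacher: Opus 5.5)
There is a genuine gap in how you handle the cross term. Your Step~1 is correct; it is exactly the last paragraph of the paper's proof. Both brackets in Step~2 are also computed correctly. The plan breaks at the cross term $(X_\alpha\zeta)\phi_1X_{-\alpha}=\sum_{j\ge2}(X_\alpha\phi_j)\,\xi_{-j}X_{-\alpha}$. In exactly the instance you need, the root-vector analogue of Proposition~\ref{prop:case1} you propose is equivalent to the statement being proved, so it cannot be established first.

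Write $\mu_A$ for the weight of $\eta_A$. In $[\eta_BH,\eta_AX_{-\alpha}]=(\mu_A-\alpha)(H)\,\eta X_{-\alpha}-(X_{-\alpha}\eta_B)\,\eta_AH$, the coefficient is nonzero only if the weight-$\alpha$ factor $X_\alpha\phi_j$ lies in $\eta_B$. In that case the error term contains $(X_{-\alpha}X_\alpha\phi_j)\cdots H$. That is an all-zero-weight product times $H$, which Proposition~\ref{prop:case1} does not cover.

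More directly, suppose $\alpha_j=\alpha$. Then $X_\alpha\phi_j=c_j\theta_j$ with $X_{-\alpha}\theta_j=\phi_j$ and $c_j=\tfrac14 m_j(m_j+2)$, and
\[
[\theta_jH_\alpha,\xi_{-j}X_{-\alpha}]=-2\,\theta_j\xi_{-j}X_{-\alpha}-\xi H_\alpha .
\]
Hence $\theta_j\xi_{-j}X_{-\alpha}\in\mathcal{L}_n$ if and only if $\xi H_\alpha\in\mathcal{L}_n$.

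Your fallback for $m_1=2$ also fails. You get $[\psi_1X_\alpha,\zeta H]=(X_\alpha\zeta)\psi_1H$, because the two $\psi_1\zeta X_\alpha$ contributions cancel ($\psi_1$ has weight $-\alpha$). So this bracket gives no relation on $\psi_1\zeta X_\alpha$.

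The missing idea is to keep the cross terms and rewrite them modulo $\mathcal{L}_n$ as multiples of $\xi H_\alpha$. When all $\alpha_i=\alpha$ (Sub-case~2.2 of the paper), the display above does this, and
\[
2[\phi_1X_\alpha,\xi_{-1}X_{-\alpha}]+\sum_{i\ge2}c_i[\theta_iH_\alpha,\xi_{-i}X_{-\alpha}]+c_1[\psi_1H_\alpha,\xi_{-1}X_\alpha]=\Bigl(2-\sum_{i=1}^n c_i\Bigr)\xi H_\alpha ,
\]
with $\sum_i c_i\ge 2n\ge 4$. The cross terms are exactly what keep this coefficient away from zero, and it is here that $n\ge2$ is used. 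Your degeneracy at $c=2$ is an artifact of discarding them.

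When the $\alpha_i$ are not all equal, $X_{\alpha_1}\phi_j$ need not have this form. The paper then avoids the $X_\alpha$--$X_{-\alpha}$ bracket entirely. Taking $\alpha_j\ne\alpha_i$, it uses the four brackets
\[
[\psi_aH_{\alpha_b},\xi_{-a}X_{\alpha_a}]=(\alpha_a,\alpha_b)\,\psi_a\xi_{-a}X_{\alpha_a}-\xi H_{\alpha_b},\qquad a,b\in\{i,j\}.
\]
These have no cross terms, since each $\xi_{-a}$ has weight zero, and they combine to give $\bigl(4-(\alpha_i,\alpha_j)(\alpha_j,\alpha_i)\bigr)\xi H_{\alpha_i}\in\mathcal{L}_n$.

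In both sub-cases, the second independent relation you were missing comes from the other factors ($\theta_i$ or $\psi_j$ with $i,j\ne1$). Your plan derives everything from the single factor $\phi_1$ and never uses them.
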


\begin{proof}
Let $\theta_i,\psi_i\in \Phi_{i}$ be such that $X_{-\alpha_i} \theta_i = X_{\alpha_i}\psi_i = \phi_i$.
We first show that $\xi H_{\alpha_i} \in \mathcal{L}_n$ for all $i = 1,\ldots, n$.  
We consider two sub-cases:

\xc{Sub-case 2.1: There exist $i,j\in \{1,\ldots, n\}$ such that $\alpha_i \neq \alpha_j$.}  By computation, 
\begin{equation*}\label{eq:case21}
\begin{aligned}\relax
[\psi_i H_{\alpha_i}, \xi_{-i} X_{\alpha_i}] & = 2\psi_i \xi_{-i} X_{\alpha_i} - \xi H_{\alpha_i}, \\
[\psi_j H_{\alpha_i}, \xi_{-j} X_{\alpha_j}] & = (\alpha_j, \alpha_i) \psi_j\xi_{-j} X_{\alpha_j} - \xi H_{\alpha_i}, \\
[\psi_i H_{\alpha_j}, \xi_{-i} X_{\alpha_i}] & = (\alpha_i,\alpha_j) \psi_i \xi_{-i} X_{\alpha_i} - \xi H_{\alpha_j}, \\
[\psi_j H_{\alpha_j}, \xi_{-j} X_{\alpha_j}] & = 2\psi_j\xi_{-j} X_{\alpha_j} - \xi H_{\alpha_j}. 
\end{aligned}
\end{equation*}
We then obtain that
\begin{multline*}\label{eq:case3-2}
(\alpha_i, \alpha_j)(\alpha_j, \alpha_i) [\psi_i H_{\alpha_i}, \xi_{-i} X_{\alpha_i}] - 4 [\psi_j H_{\alpha_i}, \xi_{-j} X_{\alpha_j}] - 2(\alpha_j,\alpha_i) [\psi_i H_{\alpha_j}, \xi_{-i} X_{\alpha_i}] \\ + 2(\alpha_j,\alpha_i)[\psi_j H_{\alpha_j}, \xi_{-j} X_{\alpha_j}] = (4 - (\alpha_i, \alpha_j)(\alpha_j, \alpha_i)) \xi H_{\alpha_i} \in \mathcal{L}_n.
\end{multline*}
Since $\alpha_i$ and $\alpha_j$ are distinct positive roots, $4 - (\alpha_i, \alpha_j)(\alpha_j, \alpha_i)  > 0$ and hence, $\xi H_{\alpha_i}\in \mathcal{L}_n$.

\xc{Sub-case 2.2: All the $\alpha_i$'s are the same.} Let $\alpha:= \alpha_i$ be the common root. 
Note that 
 $$X_\alpha \phi_i = \frac{1}{4}m_i (m_i + 2) \theta_i \quad \mbox{and} \quad 
 X_{-\alpha} \phi_i = \frac{1}{4}m_i (m_i + 2)  \psi_i.$$  
 Then, 
\begin{equation*}\label{eq:case3-3}
\begin{aligned}\relax
[\phi_1 X_\alpha, \xi_{-1} X_{-\alpha}]  & = \xi H_\alpha + \frac{1}{4}\sum_{i = 2}^n m_i(m_i + 2) \theta_i \xi_{-i} X_{-\alpha} - \frac{1}{4} m_1(m_1 + 2) \psi_1 \xi_{-1} X_\alpha, \\
[\theta_i H_{\alpha}, \xi_{-i} X_{-\alpha}] & = -2\theta_i \xi_{-i} X_{-\alpha} - \xi H_{\alpha},\\
[\psi_i H_{\alpha}, \xi_{-i} X_{\alpha}] & = 2\psi_i \xi_{-i} X_{\alpha} - \xi H_{\alpha}.
\end{aligned}
\end{equation*}
It follows that 
\begin{multline*}
2 [\phi_1 X_\alpha, \xi_{-1} X_{-\alpha}]  
+ \frac{1}{4}\sum_{i = 2}^n m_i(m_i + 2) [\theta_i H_\alpha, \xi_{-i} X_{-\alpha}] 
 + \frac{1}{4}m_1(m_1 + 2) [\psi_1 H_\alpha, \xi_{-1} X_\alpha] \\
 = \left (2 -  \frac{1}{4}\sum_{i = 1}^n m_i(m_i + 2)\right )\xi H_\alpha \in \mathcal{L}_n.
\end{multline*}
Since $n \geq 2$ and since each $m_i$ is a positive, even integer,  $\frac{1}{4}\sum_{i = 1}^n m_i(m_i + 2) \geq 2n > 2$ and hence, $\xi H_\alpha\in \mathcal{L}_n$.

Now, let $H$ be an arbitrary element of $\frakh$. Decompose $H = H_1 + H_2$, where $H_1:= \frac{1}{2}\alpha_1(H) H_{\alpha_1}$ and $H_2  := H - \frac{1}{2}\alpha_1(H) H_{\alpha_1}$.  
It suffices to show that $\xi H_2\in \mathcal{L}_n$. 
Note that $\alpha_1(H_2) = 0$ and $H_2 \xi_{-1} = 0$. 
Thus, 
$$
[\psi_1 H_2,\xi_{-1}X_{\alpha_1}]
=  \psi_1\xi_{-1}\alpha_1(H_2) X_{\alpha_1}  + (H_2\xi_{-1})\psi_1 X_{\alpha_1} - (X_{\alpha_1}\psi_1)\xi_{-1}H_2 = -\xi H_2,
$$ 
which completes the proof. 
\end{proof}

We have thus shown that $\xi H\in \mathcal{L}_n$ for all $H\in \frakh$. 
To complete the proof of Theorem~\ref{thm:liealgebra}, it remains to establish the following result:  

\begin{proposition}
    For any $\alpha\in \Delta$, $\xi X_\alpha\in \mathcal{L}_n$. 
\end{proposition}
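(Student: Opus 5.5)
We want $\xi X_\alpha \in \mathcal{L}_n$ for $\xi=\prod_{i=1}^n \phi_i$ with $\phi_i\in\Phi_{\lambda_i}$. The plan is to bracket elements of the form $\phi H$ against $\zeta X_\alpha$, use~\eqref{eq:liebracket} to isolate a multiple of $\xi X_\alpha$, and absorb the leftover terms using what has already been shown, namely that $\mathcal{A}_n(\Phi)\frakh\subseteq\mathcal{L}_n$. This last fact is available for every monomial $\xi$: Proposition~\ref{prop:case1} handles the case where the $\lambda_i$ are not all equal, Proposition~\ref{prop:case2} handles $n=2$ with $\lambda\neq0$, and Proposition~\ref{prop:case3} with the reduction preceding it handles $\lambda=0$. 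Note also that every element of the span is a sum of such monomials.

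For each $i$ and any $H\in\frakh$, \eqref{eq:liebracket} gives
$$
[\phi_i H, \xi_{-i} X_\alpha] = \bigl(\alpha(H) + (\textstyle\sum_{j\neq i}\lambda_j)(H)\bigr)\xi X_\alpha - (X_\alpha\phi_i)\,\xi_{-i} H .
$$
The last term lies in $\mathcal{A}_n(\Phi)\frakh\subseteq\mathcal{L}_n$, because $X_\alpha\phi_i\in\Phi$. Hence $\xi X_\alpha\in\mathcal{L}_n$ as soon as the linear functional $\alpha+\sum_{j\neq i}\lambda_j$ is nonzero, since we can then choose $H$ on which it does not vanish. If this functional is zero for some $i$, we try the other indices $i$. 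Another option is the bracket $[\xi_{-i}H,\phi_iX_\alpha]$, whose coefficient is $\alpha(H)+\lambda_i(H)$ and whose error term $(X_\alpha\xi_{-i})\phi_iH$ again lies in $\mathcal{A}_n(\Phi)\frakh$. So the bad situation is when $\alpha+\lambda-\lambda_i=0$ and $\alpha+\lambda_i=0$ hold simultaneously for all $i$ (with $\lambda$ as in \eqref{eq:defsigmaanddelta}, so that $\sum_{j\ne i}\lambda_j=2\lambda-\lambda_i$). I mean both of these across all variants; I still need to double-check the exact combination.

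I expect this degenerate situation to be the main obstacle. Subtracting the two conditions shows that all $\lambda_i$ equal $-\alpha$, and then the relation $\sum_{j\neq i}\lambda_j=\lambda_i-\alpha$ (with $n$ terms) forces $n=2$. The remaining case is therefore $\xi=\phi_1\phi_2$ with $\phi_1,\phi_2\in\Phi_{-\alpha}$.

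To handle it, I will use an $\mathfrak{s}_\alpha$ argument in the spirit of Lemma~\ref{lem:sl2rep}. Since $(-\alpha,\alpha)=-2\neq0$ and $-\alpha$ is a weight, the vector $\phi_2$ equals $X_{-\alpha}\psi$ for some $\psi\in\Phi_0$; this uses Lemma~\ref{lem:sl2rep} applied to $-\alpha$. Then
$$
[\phi_1X_\alpha,\psi X_{-\alpha}]=\phi_1\psi H_\alpha+(X_\alpha\psi)\phi_1X_{-\alpha}-(X_{-\alpha}\phi_1)\psi X_\alpha .
$$
The first term lies in $\mathcal{L}_2$ by the $\frakh$ case. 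The third term has weights $(-2\alpha,0)$, which are not in the degenerate configuration, so it lies in $\mathcal{L}_2$ by the first step. The middle term has weights $(-\alpha,\alpha)$ against $X_{-\alpha}$, and for $X_{-\alpha}$ the degenerate configuration would require both weights to equal $\alpha$, so this term is also covered. It remains to express $\phi_1\phi_2X_\alpha$ itself. For this I would bracket $\psi H_\alpha$ with $\phi_1X_\alpha$: the coefficients there are nondegenerate, and the term $(X_\alpha\phi_1)\psi H_\alpha$ involves $\frakh$. I will also use $[\phi_1 X_{\alpha},\phi_2 H]$-type identities with various $H$ and solve the resulting small linear system.

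If this bookkeeping fails, the fallback is to use $[\phi_1 X_\beta,\phi_2 X_{\alpha-\beta}]$ with $\beta$ chosen so that the $\frakg_\alpha$-component carries a nonzero structure constant $c_{\beta,\alpha-\beta}$. The leftover terms are products against $X_\beta$ and $X_{\alpha-\beta}$, and for these the weights are nondegenerate, so the first step applies to them.
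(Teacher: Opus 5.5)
\textbf{There is a genuine gap: the one case your first step cannot handle is never closed.}

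Your first step is correct, and it is slightly sharper than the paper's. The bracket $[\xi_{-i}H,\phi_iX_\alpha]$ is, up to sign, the paper's $[\phi_iX_\alpha,\xi_{-i}H]$. Adding $[\phi_iH,\xi_{-i}X_\alpha]$ leaves only the configuration where, for all $i$, $\lambda_i=-\alpha$ and $\alpha+\sum_{j\neq i}\lambda_j=0$. That second condition is $\alpha+2\lambda-\lambda_i=0$, not $\alpha+\lambda-\lambda_i=0$. With $\lambda_i=-\alpha$ it reads $(2-n)\alpha=0$, so your conclusion that only $n=2$, $\phi_1,\phi_2\in\Phi_{-\alpha}$ remains is right.

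That remaining case is where the gap lies.
\begin{itemize}
\item Your bracket $[\phi_1X_\alpha,\psi X_{-\alpha}]$ contains no copy of $\phi_1\phi_2X_\alpha$. All three of its terms are already known to lie in $\mathcal{L}_2$, so it gives nothing new.
\item You describe $[\psi H_\alpha,\phi_1X_\alpha]$ as nondegenerate, but its $X_\alpha$-coefficient is $(\alpha+\lambda_1)(H_\alpha)=0$, so it equals $-(X_\alpha\psi)\phi_1H_\alpha$. It also involves $\psi\phi_1$ rather than $\phi_1\phi_2$.
\item $[\phi_1X_\alpha,\phi_2H]=(X_\alpha\phi_2)\phi_1H$ for every $H$, again because $\alpha+\lambda_1=0$.
\end{itemize}
So no linear system built from these identities can produce $\phi_1\phi_2X_\alpha$.

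Your fallback $[\phi_1X_\beta,\phi_2X_{\alpha-\beta}]$ does work when it is available: $c_{\beta,\alpha-\beta}\neq 0$, and both leftover terms avoid the degenerate configuration. However, it needs $\alpha$ to be a sum of two roots. That is impossible when $\alpha$ lies in a simple ideal of $\frakg^\C$ of type $A_1$, e.g.\ for $\frakg=\mathfrak{sl}_2(\R)$ or $\mathfrak{su}(2)$. So the proposal does not prove the statement for all $G$.

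The missing idea is to put the preimage $\psi$ in the other slot, so that $X_{-\alpha}$ differentiates $\psi$. With $\psi\in\Phi_0$ and $X_{-\alpha}\psi=\phi_2$,
$$[\psi X_\alpha,\phi_1X_{-\alpha}]=\psi\phi_1H_\alpha+(X_\alpha\phi_1)\psi X_{-\alpha}-\phi_1\phi_2X_\alpha .$$
The first term lies in $\mathcal{L}_2$ by the $\frakh$-results. The second term has weights $(0,0)$ against the root $-\alpha$. The degenerate configuration for $-\alpha$ would need both weights equal to $\alpha$, so your first step applies to it. Hence $\phi_1\phi_2X_\alpha\in\mathcal{L}_2$.

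This is exactly the paper's argument. The paper uses it for every $n$ whenever all $\lambda_i=-\alpha$; your first step only lets you restrict it to $n=2$.
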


\begin{proof}
First, consider the case where there is an index $i\in \{1,\ldots, n\}$ such that $\lambda_i \neq -\alpha$. 
Let $H\in \frakh$ be such that $(\lambda_i + \alpha)(H)\neq 0$. Then, 
$$
[\phi_i X_\alpha, \xi_{-i}H] = -(\alpha + \lambda_i)(H) \xi X_\alpha  + \sum_{j = 1, j\neq i}^n  (X_\alpha \phi_j) \xi_{-j} H.
$$
Since $(X_\alpha \phi_j) \xi_{-j} H\in \mathcal{L}_n$, we have that $\xi X_\alpha\in \mathcal{L}_n$. 

Now, consider the case where $\lambda_i = -\alpha$ for all $i = 1,\ldots, n$.
We appeal to Lemma~\ref{lem:sl2rep} to obtain an element $\psi_1\in \Phi_0$ such that $X_{-\alpha} \psi_1 = \phi_1$ (since $\phi_1 \in \Phi_{-\alpha}$, the decomposition $\phi_1 = \phi'_1 + \phi''_1$ is such that $\phi''_1 = 0$). 
For each $i = 2,\ldots, n$, let $\eta_{-i}:= \psi_1 \prod_{j = 2,j\neq i}^n \phi_j$. Then, 
\begin{equation}\label{eq:laststep}
[\psi_1 X_{\alpha}, \xi_{-1}  X_{-\alpha}] = \psi_1 \xi_{-1} H_\alpha + \sum_{i = 2}^n (X_\alpha \phi_i) \eta_{-i} X_{-\alpha} - \xi X_\alpha \in \mathcal{L}_n.  
\end{equation}
The first term on the right hand side of~\eqref{eq:laststep} belongs to $\mathcal{L}_n$. For the second term, note that $X_\alpha \phi_i\in \Phi_0$ and $-\alpha \neq 0$, so the arguments in the first paragraph of the proof imply that $(X_\alpha \phi_i) \eta_{-i} X_{-\alpha}\in \mathcal{L}_n$. We thus conclude that $\xi X_\alpha\in \mathcal{L}_n$. 
\end{proof}

\section{Proofs of the main results}
\label{sec:proof-main}

We return to the setting where $G\subseteq \operatorname{GL}(\mathbb{V})$
is a real, connected, semi-simple matrix group and $\Phi$ is the space spanned by the matrix coefficients of the standard representation of $G$ on $\mathbb{V}$. 

We first establish Theorem~\ref{thm:main}. 

\begin{proof}[Proof of Theorem~\ref{thm:main}]
Let $\Phi''$ be the subspace of constant functions in~$\Phi$. Since $\Phi$ is a finite-dimensional $\frakg$-module and $\frakg$ is semi-simple, complete reducibility yields a $\frakg$-invariant decomposition $\Phi=\Phi'\oplus\Phi''$. Then, $\Phi'$ does not contain any nonzero constant function. Since $\mathcal{A}(\Phi')$ is unital and $\Phi''\subseteq\R\bfo$, we have $\mathcal{A}(\Phi')=\mathcal{A}(\Phi)$. 
By Lemma~\ref{lem:firsttwo}, Theorems~\ref{thm:degreeone} and~\ref{thm:liealgebra},  
$\mathcal{A}(\Phi')\frakg_L\subseteq \operatorname{Lie}\{W_1,W_2,W_3\}$. Also, since each $W_i$ is algebraic, we have that $\operatorname{Lie}\{W_1,W_2,W_3\} \subseteq \mathfrak{X}_{\mathrm{alg}}(G)$. 
It now remains to show that $\mathcal{A}(\Phi)\frakg_L = \mathfrak{X}_{\mathrm{alg}}(G)$. It is clear that any vector field in $\mathcal{A}(\Phi)\frakg_L$ is algebraic. Conversely, 
given a $V\in \mathfrak{X}_{\mathrm{alg}}(G)$, we show below that $V\in \mathcal{A}(\Phi)\frakg_L$. 
Since $G$ is a connected, semi-simple matrix group, $\det(x) = 1$ for all $x\in G$. Thus, $x^{-1} = \operatorname{adj}(x)$. It follows that $x^{-1} V(x)\in \frakg$ is algebraic.
Let $X_1,\ldots, X_m$ be a basis of $\frakg$. Then, there exist coefficients $a_1,\ldots, a_m\in \mathcal{A}(\Phi)$ such that 
$x^{-1} V(x) = \sum_{i = 1}^m a_i(x) X_i$,
which implies that $V = \sum_{i = 1}^m a_iL_{X_i}\in \mathcal{A}(\Phi)\frakg_L$. 
\end{proof}

The remainder of the section is dedicated to the proof of  Corollary~\ref{cor:smoothdiffapproximate}. 
We start with the following lemma:

\begin{lemma}\label{lem:smooth-Vfieldapproximate}
For any $k\in\N_0$, $\operatorname{Lie}\{W_1,W_2,W_3\}$ is dense in $\mathfrak X(G)$ with respect to the $\cok$-topology.
\end{lemma}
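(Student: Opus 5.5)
By the proof of Theorem~\ref{thm:main}, $\operatorname{Lie}\{W_1,W_2,W_3\}=\mathfrak{X}_{\mathrm{alg}}(G)=\mathcal{A}(\Phi)\frakg_L$. It therefore suffices to show that $\mathcal{A}(\Phi)\frakg_L$ is $\cok$-dense in $\mathfrak{X}(G)$.

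Fix a basis $X_1,\ldots,X_m$ of $\frakg$. The left-invariant fields $L_{X_1},\ldots,L_{X_m}$ give a global frame of $TG$, so every $V\in\mathfrak{X}(G)$ can be written uniquely as $V=\sum_i a_i L_{X_i}$ with $a_i\in \mathrm{C}^\infty(G)$. Explicitly, $a_i$ is the $i$-th coordinate of $x^{-1}V(x)\in\frakg$. Multiplication by the fixed smooth fields $L_{X_i}$ is continuous from $\mathrm{C}^\infty(G)$ with the $\cok$-topology to $\mathfrak{X}(G)$ with the $\cok$-topology. Indeed, on a compact set $K$, the $k$-jet of $aL_{X_i}$ is controlled by the $k$-jet of $a$ times constants depending on $K$ and $X_i$. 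The problem thus reduces to showing that $\mathcal{A}(\Phi)$ is $\cok$-dense in $\mathrm{C}^\infty(G)$. That is, for every $a\in \mathrm{C}^\infty(G)$, every compact $K\subseteq G$ and every $\epsilon>0$, we need a polynomial $p$ in the matrix entries with $d_{k,K}(a,p)<\epsilon$ in local coordinates.

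For this density step I would use that $G$ is a closed subgroup of $\operatorname{GL}(\mathbb{V})$, hence a closed embedded submanifold of $\operatorname{End}(\mathbb{V})\cong\R^{N}$. The restrictions to $G$ of polynomials on $\operatorname{End}(\mathbb{V})$ are exactly $\mathcal{A}(\Phi)$, since $\Phi$ is spanned by the coordinate functions restricted to $G$. Given $a$ and $K$, I would extend $a|_U$, for a relatively compact neighborhood $U$ of $K$, to a smooth compactly supported function $\tilde a$ on $\R^N$. This is possible using a tubular neighborhood retraction $r$ near $G$ and a cutoff, taking $\tilde a=\rho\cdot(a\circ r)$. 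By the Weierstrass approximation theorem in its $\mathrm{C}^k$ form (for example, convolution with polynomial kernels, or Nachbin's theorem), there is a polynomial $p$ on $\R^N$ with $\|\partial^\beta(\tilde a-p)\|<\epsilon$ on a compact box containing $K$, for all $|\beta|\le k$. Restricting to $G$ and using the chain rule with the embedding charts, whose derivatives are bounded on $K$, gives $\cok$-closeness of $a$ and $p|_G$ on $K$.

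The main point needing care is this final step. It requires that the ambient $\mathrm{C}^k$ approximation pulls back to a $\mathrm{C}^k$ approximation on $G$. This is where the closedness and embeddedness of $G$, and the existence of the smooth extension $\tilde a$, are used. The rest follows from the global left-invariant frame and the identity $\mathfrak{X}_{\mathrm{alg}}(G)=\mathcal{A}(\Phi)\frakg_L$ already obtained.
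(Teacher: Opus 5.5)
Your argument is correct, and its skeleton matches the paper's. Both use $\operatorname{Lie}\{W_1,W_2,W_3\}=\mathcal{A}(\Phi)\frakg_L$ from the proof of Theorem~\ref{thm:main}, and both write an arbitrary $V$ as $\sum_i a_i L_{X_i}$ in the global left-invariant frame. Both then reduce everything to $\cok$-density of $\mathcal{A}(\Phi)$ in $\mathrm{C}^\infty(G)$.

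You differ from the paper in how you prove that density step. The paper stays intrinsic on $G$ and invokes Nachbin's theorem, checking its hypotheses directly: $\Phi\cup\{\bfo\}$ strongly separates points, and $\Phi$ separates tangent vectors because $d_x c_{ij}(A)=c_{ij}(A)$ for the matrix-entry functions $c_{ij}$. You instead move the problem into the ambient space $\operatorname{End}(\mathbb{V})\cong\R^N$. You extend $a$ near $K$ by a tubular-neighbourhood retraction and a cutoff, approximate in $\mathrm{C}^k$ by a polynomial on a box using the classical Euclidean Weierstrass theorem, and pull the estimate back to $G$ with the chain rule.

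The paper's route is shorter and needs no extension machinery. Its two separation conditions amount to saying that the map $G\to\Phi^*$ given by evaluation is an injective immersion, which is exactly what Nachbin requires. Your route is more elementary, relying only on Euclidean approximation plus standard differential topology. In effect it reproves the special case of Nachbin's theorem for an embedded submanifold, at the cost of the tubular neighbourhood and the chain-rule bookkeeping.

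A small aside: a closed subgroup of $\operatorname{GL}(\mathbb{V})$ need not be closed in $\operatorname{End}(\mathbb{V})$, so your ``hence closed embedded submanifold of $\operatorname{End}(\mathbb{V})$'' needs justification. It does hold here because $\det\equiv 1$ on $G$. In any case your extension argument only uses that $G$ is embedded near $K$, so nothing depends on it.
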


\begin{proof}
The set $\Phi \cup \{\bfo\}$ strongly separates points, i.e., $\bfo$ is nowhere zero and for any two distinct points $x, y\in G$, there exists a matrix coefficient $\phi\in \Phi$ such that $\phi(x)\neq \phi(y)$. Also, $\Phi$ separates tangent vectors. To wit, we fix a basis of $\mathbb{V}$ and let $c_{ij}(A)$ be the $ij$th entry of $A\in \operatorname{End}(\mathbb{V})$.   
Given a nonzero $A\in T_xG$, say $c_{ij}(A)\neq 0$, the matrix coefficient $c_{ij}: G\to \R$  satisfies $d_x c_{ij}(A) = c_{ij}(A)$. 
Thus, by Nachbin's Theorem~\cite[Theorem~1.2.1]{Llavona},   $\mathcal{A}(\Phi)$ is dense in $\mathrm{C}^k(G)$ with respect to the $\cok$-topology for any $k\in \N_0$. 
Next, let $X_1,\ldots, X_m$ be a basis of $\frakg$, so $L_{X_1},\ldots, L_{X_m}$ is a global frame on $G$. Then, for any $V\in \mathfrak{X}(G)$, there exist $a_1,\ldots, a_m\in \mathrm{C}^\infty(G)$ such that $V = \sum_{i = 1}^m a_i L_{X_i}$. 
It follows that
$\mathfrak{X}(G) = \operatorname{cl}_{\cok}(\mathcal{A}(\Phi)\frakg_L) = \operatorname{cl}_{\cok}(\operatorname{Lie}\{W_1,W_2,W_3\})$.
\end{proof}

Given a vector field $V\in \mathfrak{X}(G)$, the {\it vertical prolongation} of $V$, which we denote by $j^kV$ (following the notation of~\cite{krupka2001some}), is the smooth vector field on $J^k(G,G)$ such that 
$$
j^kV(j_x^k f) = \left. \frac{d}{dt} \right |_{t = 0} j^k_x(e^{tV}\circ f), 
$$
for any smooth map $f: G\to G$. It follows that 
\begin{equation}\label{eq:exponentialofprolongV}
e^{t j^kV}(j_x^kf)=j_x^k(e^{tV}\circ f).
\end{equation}
In particular, if $V$ is complete, then so is $j^k(V)$. 
Note that the map $j^k :\mathfrak{X}(G)\to \mathfrak{X}(J^k(G,G))$, defined by sending $V$ to $j^kV$, is linear and continuous, where $\mathfrak{X}(G)$ is equipped with the $\cok$-topology and $\mathfrak{X}(J^k(G,G))$ is equipped with the $\mathsf{co}^0$-topology (one can establish this fact by using the local coordinates of fiber charts~\cite[Section 3]{krupka2001some}). 
Also, note that $j^k$ is Lie-bracket preserving, i.e., $j^k[U, V] = [j^kU, j^kV]$ for any $U, V\in \mathfrak{X}(G)$.  

Let $\operatorname{Diff}_c(G)$ be the subgroup of $\operatorname{Diff}_0(G)$ that is isotopic to $\operatorname{id}_G$ via a compactly supported isotopy, and $\mathfrak{X}_c(G)$ be the space of compactly supported vector fields on $G$.  

The next two lemmas must be known in the literature. 
For completeness of presentation, we include relatively short proofs.  
We first have the following fact: 

\begin{lemma}\label{lem:smooth-compactflowapproximate}
Let $V\in\mathfrak X_c(G)$, $K\subseteq G$ be compact, $k\in \N_0$, and $\epsilon>0$. Then, there exists a $U\in\operatorname{Lie}\{W_1,W_2,W_3\}$ such that $e^{t U}$ is well defined on an open neighborhood of $K$ for all $t\in [0,1]$ and, moreover, 
$d_{k,K}(e^U, e^V)<\epsilon.$
\end{lemma}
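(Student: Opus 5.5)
The plan is to combine the density of $\operatorname{Lie}\{W_1,W_2,W_3\}$ from Lemma~\ref{lem:smooth-Vfieldapproximate} with continuous dependence of flows on the vector field, carried out on the jet bundle through the vertical prolongation $j^k$. Since $d_{k,K}(e^U,e^V)=\max_{x\in K}\|j^k_x(e^U)-j^k_x(e^V)\|$, and by~\eqref{eq:exponentialofprolongV} we have $j^k_x(e^{tV}) = e^{t j^kV}(j^k_x\operatorname{id}_G)$, the statement becomes a $\mathsf{co}^0$ estimate. Namely, the flows of the vector fields $j^kU$ and $j^kV$ on $J^k(G,G)$, started from the compact set $\widetilde K:=\{j^k_x\operatorname{id}_G\mid x\in K\}$ and run for time~$1$, must be uniformly close.

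First, since $V$ is compactly supported, it is complete, and so is $j^kV$. The set $C:=\{e^{tj^kV}(p)\mid p\in \widetilde K,\ t\in[0,1]\}$ is therefore compact in $J^k(G,G)$. Choose a compact neighborhood $N$ of $C$ in $J^k(G,G)$, and a compact set $K'\subseteq G$ whose interior contains the base projections of $N$ together with a neighborhood of $K$ (the source projection is just $x$, which stays in $K$). On $N$ I would set up the standard Grönwall estimate. If $\|j^kU - j^kV\|_{\mathsf{co}^0,N}<\delta$, then any integral curve of $j^kU$ starting in $\widetilde K$ stays within distance $\delta e^{L}$ of the corresponding curve of $j^kV$ for as long as it remains in $N$. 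Here $L$ is a Lipschitz constant of $j^kV$ on $N$, computed in the Euclidean embedding $J^k(G,G)\subseteq\R^{N_k}$ (extend $j^kV$ locally if needed). For $\delta$ small, this distance is smaller than $\operatorname{dist}(C,\partial N)$. A continuity argument then shows the curves never leave $N$ on $[0,1]$. Consequently the flow of $j^kU$, and hence that of $U$, exists for $t\in[0,1]$ on a neighborhood of $K$, and at time $1$ the error is below $\epsilon$ once $\delta e^L<\epsilon$.

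Second, to produce $U$, I would use the continuity of $j^k:\mathfrak X(G)\to\mathfrak X(J^k(G,G))$ from the $\cok$ topology to the $\mathsf{co}^0$ topology. This gives a $\cok$-neighborhood of $V$, determined by $K'$ and some $\delta'$, that is mapped into $\{W : \|W-j^kV\|_{N}<\delta\}$. Lemma~\ref{lem:smooth-Vfieldapproximate} then provides $U\in\operatorname{Lie}\{W_1,W_2,W_3\}$ in that neighborhood. Note that $U$ need not be complete or compactly supported, which is why the statement only asks for the flow to exist near $K$.

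The main obstacle is the existence part: showing that $e^{tU}$ exists on an open neighborhood of $K$ for all $t\in[0,1]$, and not just that the curves are close wherever both are defined. The trapping argument above handles this, since curves of $j^kU$ cannot escape the compact set $N$ before time $1$. Because the projection of $J^k(G,G)$ onto the target is continuous, the same holds for the base flow of $U$. Openness of the domain of the flow, together with compactness of $K$, then yields an open neighborhood of $K$ on which the flow exists. A minor technical point is the Lipschitz bound on $j^kV$ near $C$ in embedded coordinates. This follows from smoothness and compactness, with $j^kV$ extended smoothly to a neighborhood of $J^k(G,G)$ in $\R^{N_k}$ (or by working chartwise).
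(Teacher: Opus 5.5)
Your proposal is correct and takes essentially the same route as the paper. You pass to the prolonged flows on $J^k(G,G)$ started from $\iota(K)$, trap them in a compact neighborhood $Q$ of the compact orbit set via a Lipschitz--Gr\"onwall estimate with $\delta e^{C}<\epsilon$, and obtain $U$ from Lemma~\ref{lem:smooth-Vfieldapproximate} together with the continuity of $j^k$. You also note that openness of the flow domain upgrades existence from $K$ to an open neighborhood of $K$, a point the paper leaves implicit.
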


\begin{proof}
Since $V$ is compactly supported, $V$ is complete and hence, so is $j^kV$.    
For convenience, let $\iota(x):=j_x^k\operatorname{id}_G$ for any $x\in G$.
Let $$P:=\left\{e^{t j^kV}(\iota(x)) \mid x\in K,\ 0\leq t\leq1\right\},$$
which is a compact subset of $J^k(G, G)$. 
Let $Q$ be a compact neighborhood of $P$ in $J^k(G, G)$. Without loss of generality, we assume that  $\epsilon > 0$ is sufficiently small such that if $z\in J^k(G, G)$ satisfies $\operatorname{dist}(z, P) < \epsilon$, then $z$ belongs to the relative interior of $Q$. Since $j^kV$ is smooth and since $Q$ is compact, there exists a constant $C > 0$ such that 
$$\|j^kV(w) - j^kV(z)\| \leq C\|w- z\|,\quad \mbox{for all } w, z\in Q.$$  
Next, we choose $\delta>0$ such that $\delta e^C<\epsilon$. 
By Lemma~\ref{lem:smooth-Vfieldapproximate} and by the fact that the map $j^k: V\mapsto j^kV$ is continuous, there exists a $U\in \operatorname{Lie}\{W_1,W_2,W_3\}$ such that 
$$
\max_{z\in Q} \|j^k U(z) - j^k V(z) \| < \delta.
$$
We show below that $U$ is a desired vector field. 

Given an arbitrary $x\in K$, let $v(t):= e^{t j^k V}(\iota(x))$ and $u(t):= e^{t j^k U}(\iota(x))$. Then, for any $t \in [0, 1]$ such that $u(s)\in Q$ for all $s\in [0,t]$,   
$$
\begin{aligned}
\|u(t) - v(t)\|
&\leq\int_0^t\|j^kU(u(s))-j^kV(v(s))\|ds\\
&\leq\int_0^t\|j^kU(u(s))-j^kV(u(s))\|ds +\int_0^t\|j^kV(u(s))-j^kV(v(s))\|ds\\
&\leq\delta t + C\int_0^t\|u(s) - v(s)\|ds.
\end{aligned}
$$
By Gronwall's inequality~\cite[Lemma~2.7]{TeschlODE}, we obtain that
$$
\|u(t) - v(t)\| \leq \delta t e^{Ct} \leq \delta e^C <\epsilon.
$$
Since $v(t)\in P$, the above inequality implies that $u(t)$ belongs to the relative interior of $Q$.  
By construction of $P$, $v(t)\in P$ for all $t\in [0,1]$. 
Then, using the standard first-exit arguments, we obtain that $u(t)$ belongs to the relative interior of $Q$ for all $t\in[0,1]$. 
Finally, note that $u(1) = j_x^k(e^U)$ and $v(1) = j_x^k(e^V)$ and hence, $\|j^k_x(e^U) - j^k_x(e^V)\| < \epsilon$. This holds for all $x\in K$, so $d_{k,K}(e^U,e^V) < \epsilon$. 
\end{proof}
 
We next have the following lemma:

\begin{lemma}\label{lem:finitewordsapproximation}
Let $U\in \operatorname{Lie}\{W_1,W_2,W_3\}$, $K\subseteq G$ be compact, $k\in \N_0$, and $\epsilon > 0$. Suppose that $e^{tU}$ 
is well defined on an open neighborhood of $K$ for all $t\in [0,1]$; then, there exists an element $f\in \mathcal{W}$ such that $d_{k,K}(e^{U}, f) < \epsilon$.  
\end{lemma}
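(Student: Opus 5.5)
The plan is to argue by induction on the structure of $U$ as an iterated Lie bracket, using the classical facts that flows of sums and brackets are limits of compositions of flows. Let $\mathcal{S}$ be the set of $U\in\operatorname{Lie}\{W_1,W_2,W_3\}$ with the following property: for every compact $K$, every $k$, every $\epsilon>0$ and every $T>0$ such that $e^{tU}$ is defined on a neighborhood of $K$ for $t\in[0,T]$, there is $f\in\mathcal W$ with $d_{k,K}(e^{TU},f)<\epsilon$. We show that $\mathcal S$ contains $W_1,W_2,W_3$, is closed under real scalar multiples, sums and Lie brackets, and hence equals $\operatorname{Lie}\{W_1,W_2,W_3\}$. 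The case $T=1$ is then the statement.

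The generators and scalar multiples are immediate. For $U=\pm W_i$ and any $t\in\R$, $e^{tU}=e^{\pm tW_i}\in\mathcal W$ exactly, because each $W_i$ is complete. Scalar multiples are handled by rescaling $T$.

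For sums and brackets, we use the standard product formulas, applied to $k$-jets. By \eqref{eq:exponentialofprolongV} and the bracket-preserving property of $j^k$, it suffices to work with the prolonged vector fields $j^kU$ on $J^k(G,G)$ near the compact set $\{e^{tj^kU}(\iota(x)) : x\in K,\ t\in[0,T]\}$, where everything is uniformly Lipschitz. The Trotter formula gives
\[
\bigl(e^{(T/N)A}\circ e^{(T/N)B}\bigr)^N\to e^{T(A+B)},
\]
and the commutator formula gives
\[
\bigl(e^{-sB}\circ e^{-sA}\circ e^{sB}\circ e^{sA}\bigr)^{N}\to e^{T[A,B]},\qquad s=\sqrt{T/N}.
\]
Both convergences hold uniformly in the $\mathrm{C}^k$ sense on $K$, with the usual sign convention for $[A,B]$. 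The proof goes by the local error estimate $O(h^2)$, respectively $O(h^{3/2})$, on compacts, followed by the same Gronwall and first-exit argument as in Lemma~\ref{lem:smooth-compactflowapproximate}. This also shows that the intermediate compositions stay in a compact neighborhood where the flows of $A$ and $B$ exist for the small times involved.

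The remaining step is to replace each exact factor $e^{hA}$, $e^{hB}$ by an element of $\mathcal W$ supplied by the induction hypothesis, applied on suitable compact sets. This uses the fact that composition is continuous in the $\cok$ semimetrics on compacts. The errors from the $O(N)$ factors accumulate with a Lipschitz factor bounded uniformly in $N$, since the compositions stay $\mathrm{C}^{k+1}$-close to a flow over a bounded time. Hence we first fix $N$ large and then choose each approximation to accuracy $\epsilon/(CN)$.

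The main obstacle is that $U$, $A$ and $B$ need not be complete, so flows are only defined locally. The induction hypothesis must therefore be applied only on compact sets and time ranges where the flows exist. To handle this, we fix a compact neighborhood $Q\supset P$ as in Lemma~\ref{lem:smooth-compactflowapproximate} and use the fact that $A$ and $B$ have flows defined for small times on $Q$. We also check that small $\mathrm{C}^0$ perturbations of the points keep them inside $Q$, by the first-exit argument, so every factor is evaluated within its domain.
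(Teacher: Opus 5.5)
Your proposal is correct and follows essentially the paper's route: the paper lifts to $J^k(G,G)$, where the $j^kW_i$ are complete and $j^kU\in\operatorname{Lie}\{j^kW_1,j^kW_2,j^kW_3\}$, and then invokes the argument of \cite{Varolin01, ForstnericStein}, which is exactly the Trotter/commutator-formula induction over the Lie structure that you spell out (your $d_{k,K}$ formulation on $G$ is equivalent via \eqref{eq:exponentialofprolongV}). One small fix is needed: define $\mathcal S$ with $T$ ranging over all of $\R$ (equivalently, require $\pm U\in\mathcal S$), because negative scalar multiples and the factors $e^{-sA},e^{-sB}$ in the commutator formula need approximations at negative times, which your $T>0$ definition does not provide.
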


\begin{proof}
Let $\iota(K):= \{\iota(x) \mid x \in K\}$. Then, $e^{t j^k U}$ is well defined on $\iota(K)$ for all $t\in [0,1]$. 
Note that each $j^k W_i$, for $i = 1,2,3$, is a complete vector field and that $j^kU\in \operatorname{Lie}\{j^kW_1, j^kW_2, j^kW_3\}$. Using the same arguments in~\cite{Varolin01, ForstnericStein},
there exist sequences $t_1,\ldots, t_m\in \R$ and $j^k W_{i_1},\ldots, j^k W_{i_m}$, with $i_j\in \{1,2,3\}$, such that 
$$\max_{z\in \iota(K)}\| e^{j^k U}(z) - e^{t_m j^k W_{i_m}}\circ \cdots\circ e^{t_1 j^k W_{i_1}}(z) \| < \epsilon.$$
We then let $f:= e^{t_m W_{i_m}}\circ \cdots \circ e^{t_1 W_{i_1}}$, and conclude the proof by noting that $j^k_x (e^U) = e^{j^k U}(\iota(x))$ and $e^{t_m j^k W_{i_m}}\circ \cdots\circ e^{t_1 j^k W_{i_1}}(\iota(x)) = j^k_xf$ for all $x\in K$.   
\end{proof}

With the lemmas above, we now prove Corollary~\ref{cor:smoothdiffapproximate}:

\begin{proof}[Proof of Corollary~\ref{cor:smoothdiffapproximate}]
    Let $g\in \operatorname{Diff}_0(G)$, $K\subseteq G $ be compact,  and $\epsilon >0$. 
    Let $h: G\times [0,1]\to G$ be a smooth isotopy with $h(\cdot, 0) = \operatorname{id}_G$ and $h(\cdot, 1) = g$. Let $\mathcal{N}$ be an open neighborhood of $K$ in $G$ with compact closure, and $\rho: G\to [0,1]$ be a smooth, compactly supported function such that $\rho(x) = 1$  for any $x\in \cup_{t\in[0,1]}h(\mathcal{N}, t)$.
    Let $U(x,t)$ be the time-varying vector field that generates the isotopy~$h$, which satisfies    
    $U(h(x,t), t):=\frac{\partial}{\partial t} h(x, t)$.  
    Now, let $\tilde g$ be the flow generated by $\rho U$ at $t = 1$. It follows that $\tilde g\in\operatorname{Diff}_c(G)$ and  $\tilde g|_{\mathcal{N}} =g|_{\mathcal{N}}$. Hence $d_{k,K}(g,f)=d_{k,K}(\tilde g,f)$ for every $f\in\operatorname{Diff}(G)$. 
    
    We exhibit below an element $f\in \mathcal{W}$ such that $d_{k,K}(\tilde g,f) < \epsilon$. It is known (see, e.g.,~\cite[Theorem~1]{HRTDiff} and the paragraph right after)
    that the group $\operatorname{Diff}_c(G)$ is simple. 
    Because the group generated by $e^V$ for $V\in \mathfrak{X}_c(G)$ is a normal subgroup of $\operatorname{Diff}_c(G)$,  
    there exist an $m\in \N$ and $V_1,\ldots, V_m\in \mathfrak{X}_c(G)$ such that 
    $\tilde g = e^{V_m} \circ \cdots \circ e^{V_1}$. 
    Let $K_1, \ldots, K_m$ be compact subsets of $G$ such that $e^{V_j}(K_{j-1})\subseteq \rint K_j$ for all $j = 1,\ldots, m$, where $K_0:= K$. 
    By Lemmas~\ref{lem:smooth-compactflowapproximate} and~\ref{lem:finitewordsapproximation}, for any $j$ and any $\epsilon_j > 0$, there exists an element $f_j\in \mathcal{W}$ such that $f_j(K_{j-1})\subseteq \rint K_j$ and, moreover, $d_{k, K_{j-1}}(e^{V_j},f_j) < \epsilon_j$. 
    Let $f:=f_m\circ \cdots \circ f_1$. 
    Since the composition map that sends $(h_1, h_2)\in \mathrm{Diff}(G)\times \mathrm{Diff}(G)$ to $h_2\circ h_1\in \mathrm{Diff}(G)$ is continuous with respect to the $\cok$-topology, we can choose $\epsilon_j$'s sufficiently small such that $d_{k,K}(\tilde g, f) < \epsilon$. 
\end{proof}

\printbibliography

\end{document}